 \font\smallit=cmti10
 \font\smalltt=cmtt10
\renewcommand\section{\@startsection {section}{1}{\z@}
 {-30pt \@plus -1ex \@minus -.2ex}
 {2.3ex \@plus.2ex}
 {\normalfont\normalsize\bfseries}}
\renewcommand\subsection{\@startsection{subsection}{2}{\z@}
 {-3.25ex\@plus -1ex \@minus -.2ex}
 {1.5ex \@plus .2ex}
 {\normalfont\normalsize\bfseries}}
\renewcommand{\@seccntformat}[1]{\csname the#1\endcsname. }
\newtheorem{theorem}{Theorem}
 \newtheorem{lemma}{Lemma}
 \newtheorem{proposition}{Proposition}
 \newtheorem{corollary}{Corollary}
\numberwithin{equation}{section}
\numberwithin{theorem}{section}
\numberwithin{proposition}{section}
\numberwithin{lemma}{section}
\numberwithin{corollary}{section}
\numberwithin{remark}{remark}
\begin{document}

\begin{center}
 \uppercase{\bf Mixed sums of triangular numbers and certain binary quadratic forms}
 \vskip 20pt
 {\bf Kazuhide Matsuda
}\\
 {\smallit Faculty of Fundamental Science, National Institute of Technology, Niihama College, Ehime, Japan.}\\
 {\tt matsuda@sci.niihama-nct.ac.jp}\\

 \end{center}
 \vskip 30pt
\centerline{\smallit Received: , Revised: , Accepted: , Published: } 
 \vskip 30pt

\centerline{\bf Abstract}

\noindent 
In this paper, we prove that for $d=3,\dots,8$, every natural number can be written as $t_x+t_y+3t_z+dt_w$, where $x$, $y$, $z$, and $w$ are nonnegative integers and $t_k=k(k+1)/2$ $(k=0,1,2,\ldots)$ is a triangular number. Furthermore, we study mixed sums of triangular numbers and certain binary quadratic forms.

\pagestyle{myheadings}
 \markright{\smalltt INTEGERS: 15 (2015)\hfill}
 \thispagestyle{empty}
 \baselineskip=12.875pt
 \vskip 30pt

\section{Introduction}
Let $\mathbb{N}=\{1,2,3,\ldots\}$, $\mathbb{N}_0=\{0, 1,2,3,\ldots\}$, and denote the set of squares by $\{x^{2}: x \in \mathbb{Z}\}.$ A {\em triangular} number is defined as $t_x=x(x+1)/2$, where $x\in\mathbb{N}_0$. Furthermore, for positive integers $j,k,n\in\mathbb{N},$ let $d_{j,k}(n)$ denote the number of positive divisors $d$ of $n$ such that $d\equiv j \,\,(\mathrm{mod} \,k).$
\par
A well-known result of Gauss states that every $n\in\mathbb{N}$ can be written as a sum of three triangular numbers; that is, $n=\Delta_1+\Delta_2+\Delta_3,$
where $\Delta_j \,(1\le j \le 3)$ is a triangular number. In 1862 and 1863, Liouville \cite{Liouville-1,Liouville-2} proved the following result:
\begin{theorem} $\mathrm{(Liouville)}$
\label{thm:Liouville}
{\it Let $a$, $b$, and $c$ be positive integers with $a\le b \le c.$ Then every $n\in\mathbb{N}_0$ can be written as $a t_{x}+b t_{y}+c t_{z}$ for $x,y,z\in\mathbb{N}_0$ if and only if $(a,b,c)$ is among the following vectors:
$$
(1,1,1), (1,1,2), (1,1,4), (1,1,5), (1,2,2), (1,2,3), (1,2,4).
$$
}
\end{theorem}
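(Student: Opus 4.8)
The plan is to prove the two implications separately: the necessity ("only if") is a finite search over small values of $n$, while the sufficiency ("if") reduces each surviving triple to a representation theorem for a ternary quadratic form. The bridge between the two directions is the elementary identity
\[
8(a t_x + b t_y + c t_z) + (a+b+c) = a(2x+1)^2 + b(2y+1)^2 + c(2z+1)^2,
\]
which follows from $8t_x+1=(2x+1)^2$. Since every odd positive integer has the form $2x+1$ with $x\in\mathbb{N}_0$, and negating an odd integer leaves its square unchanged, the assertion ``every $n\in\mathbb{N}_0$ equals $a t_x+b t_y+c t_z$'' is \emph{equivalent} to ``every integer $N\equiv a+b+c\pmod 8$ is represented by $aX^2+bY^2+cZ^2$ with $X,Y,Z$ all odd.'' I would keep this reformulation in hand throughout.

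For necessity I would test small $n$. Writing $1=a t_x+b t_y+c t_z$ forces one summand to equal $1$ and the rest to vanish, and a summand equal to $1$ forces its coefficient to be $1$; since $a=\min$, we get $a=1$. Next, $2=t_x+b t_y+c t_z$ is impossible once $b\ge 3$, because the only triangular numbers $\le 2$ are $0$ and $1$ and $2$ itself is not triangular; hence $b\in\{1,2\}$. If $b=1$, the sums $t_x+t_y$ that are $\le 5$ are exactly $\{0,1,2,3,4\}$, so representing $n=5$ forces $c\le 5$; and $n=8$ eliminates $c=3$, since neither $8$ nor $8-3=5$ is a sum of two triangular numbers, leaving $c\in\{1,2,4,5\}$. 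If $b=2$, the sums $t_x+2t_y$ that are $\le 4$ are exactly $\{0,1,2,3\}$ (note $4$ is excluded), so representing $n=4$ forces $c\le 4$, giving $c\in\{2,3,4\}$. Collecting the survivors yields precisely the seven listed triples.

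For sufficiency I would run the reformulation on each triple. The case $(1,1,1)$ is exactly Gauss's theorem quoted above. The remaining six reduce, respectively, to representing every integer in a fixed residue class modulo $8$ by one of $X^2+Y^2+2Z^2$, $X^2+Y^2+4Z^2$, $X^2+Y^2+5Z^2$, $X^2+2Y^2+2Z^2$, $X^2+2Y^2+3Z^2$, $X^2+2Y^2+4Z^2$, with all variables odd. This is where the genuine content lies, for two reasons. First, each form must be shown to represent \emph{all} integers in the relevant progression, which is a true theorem and not a congruence check: it rests on the arithmetic of ternary quadratic forms, so I would analyse the genus of each form and the associated local conditions, verifying that local solvability holds throughout the progression and forces global representation (controlling, where the genus has more than one class, the exceptional set, e.g.\ via a theta-series argument).

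The genuinely delicate point is the second one: unlike $(1,1,1)$, where reducing mod $8$ already forces $X^2+Y^2+Z^2\equiv 3$ to be $1+1+1$ and hence all variables odd, the congruence alone does not in general force oddness — for instance $X^2+Y^2+2Z^2\equiv 4\pmod 8$ also admits the all-even representation $4+0+0$. So after producing \emph{a} representation one must still extract one with $X,Y,Z$ all odd. I would handle this by analysing the $2$-adic behaviour of each form and showing that an odd representation always exists in the prescribed class. This parity bookkeeping, carried out form by form, together with the representation theorem for each ternary form, is the main obstacle; the necessity direction, by contrast, is entirely routine.
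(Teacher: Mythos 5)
The first thing to note is that the paper contains no proof of this statement at all: Theorem \ref{thm:Liouville} is Liouville's classical result, quoted with a citation to his 1862--1863 papers, so there is no in-paper argument to compare yours against. Judged on its own terms, your necessity half is complete and correct: the checks at $n=1,2,5,8$ (when $b=1$) and $n=1,2,4$ (when $b=2$) do isolate exactly the seven triples, and the identity $8t_x+1=(2x+1)^2$ correctly recasts sufficiency as the statement that $aX^2+bY^2+cZ^2$ represents every integer $8n+(a+b+c)$ with $X,Y,Z$ all odd.

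The gap is in the sufficiency half, which is where essentially all the content of the theorem lies. For the six triples other than $(1,1,1)$ you never establish either of the two things your reduction requires: you only announce that you ``would analyse the genus'' of each form and ``would analyse the $2$-adic behaviour'' to extract odd representations. Two concrete problems. First, the representation theorems themselves (e.g.\ that $X^2+Y^2+2Z^2$ represents every $N\equiv 4\pmod 8$) are precisely the nontrivial input; without invoking Dickson's classification of regular ternary forms or proving the relevant identities, nothing has been shown. Second, and more seriously, your proposed remedy for the parity problem cannot work as stated: local ($2$-adic) admissibility of odd representations does not produce a global odd representation, because --- as your own example $4=4+0+0$ shows --- the all-even representations occupy the same congruence classes and are equally admissible locally. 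What is actually needed is a counting or descent argument. For instance, for $X^2+Y^2+2Z^2=8n+4$ a mod $8$ analysis shows every representation is either all-odd or all-even, and the all-even ones are in bijection with representations of $2n+1$, so an odd representation exists if and only if $r_{1,1,2}(8n+4)>r_{1,1,2}(2n+1)$; proving such strict inequalities is exactly what the theta-function relations in this paper's toolkit (e.g.\ $r_{1,1,3}(8n+5)=16\,t_{1,1,3}(n)$ of Theorem \ref{thm:relation}, due to Barrucand--Cooper--Hirschhorn and Chandrashekar--Cooper--Han) are designed to do, and nothing of that kind appears in your outline. Until those six form-by-form arguments are supplied, the sufficiency direction is a plan, not a proof.
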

\par
In this paper, we prove that $n=\Delta_1+\Delta_2+3\Delta_3+d\Delta_4$, where $3\le d \le 8$ and $\Delta_j \,(1\le j \le 4)$ is a triangular number. To prove this conjecture, we use the results of Barrucand et al. \cite{Barrucand-Cooper-Hirschhorn} and Adiga et al. \cite{Adiga-Cooper-Han}, which were obtained using Ramanujan's theory of theta functions. For $q\in\mathbb{C}$ such that $|q|<1,$ we introduce
$$
\varphi(q)=\sum_{n\in\mathbb{Z}} q^{n^2}, \,\,
\psi(q)=\sum_{n=0}^{\infty} q^{n(n+1)/2}, \,\,
a(q)=\sum_{m,n\in\mathbb{Z}} q^{m^2+mn+n^2}.
$$
Note that Williams \cite{Williams} determined the number of representations of $n\in\mathbb{N}_0$ as $\Delta_1+\Delta_2+2(\Delta_3+\Delta_4)$
using an entirely arithmetic method.
\par
Our main theorems are as follows:

\begin{theorem}
\label{thm:(1,1,3,3)}
{\it
\quad
\begin{enumerate}\itemsep=0pt
\item[$(1)$]
For $d\in\mathbb{N}$ with $3\le d \le 8,$ every $n\in\mathbb{N}_0$ can be represented as $t_x+t_y+3t_z+dt_w$ for $x,y,z,w\in\mathbb{N}_0.$
\item[$(2)$]
Let $a$, $b$, $c$, and $d$ be positive integers with $a\le b \le c \le d.$ Then every $n\in\mathbb{N}_0$ can be written as $a t_{x}+b t_{y}+c t_{z}+dt_w$ for $x,y,z,w\in\mathbb{N}_0$ if and only if $(a,b,c,d)$ is among the following vectors:
\begin{align*}
&(1,1,1,d), (1,1,2,d), (1,1,4,d), (1,1,5,d), (1,2,2,d), (1,2,3,d), (1,2,4,d),  \\
&(1,1,3,3), (1,1,3,4), (1,1,3,5), (1,1,3,6), (1,1,3,7), (1,1,3,8).
\end{align*}
\end{enumerate}
}
\end{theorem}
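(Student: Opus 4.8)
The plan is to turn the representation problem into a positivity statement about the coefficients of an explicit theta generating function. Since $\psi(q^a)=\sum_{x\ge 0}q^{at_x}$, the number $r_{a,b,c,d}(n)$ of representations of $n$ as $at_x+bt_y+ct_z+dt_w$ is precisely the coefficient of $q^n$ in $\psi(q^a)\psi(q^b)\psi(q^c)\psi(q^d)$. For part $(1)$ the relevant series is
$$R_d(q):=\psi(q)^2\,\psi(q^3)\,\psi(q^d)=\sum_{n\ge 0}r_{1,1,3,d}(n)\,q^n,\qquad 3\le d\le 8,$$
so that $(1,1,3,d)$ is universal if and only if $r_{1,1,3,d}(n)>0$ for all $n\in\mathbb{N}_0$. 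Completing the square through $8t_k+1=(2k+1)^2$ gives the equivalent arithmetic reformulation: $(1,1,3,d)$ is universal iff every integer congruent to $5+d$ modulo $8$ is represented by the quaternary form $X^2+Y^2+3Z^2+dW^2$ with $X,Y,Z,W$ all odd. Either viewpoint reduces the theorem to understanding the coefficients of $R_d(q)$.

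The engine of the proof is to evaluate $R_d(q)$ explicitly. Each $R_d$ is a weight-$2$ theta product on a congruence subgroup of level dividing $\mathrm{lcm}(4,3,d)$, and the plan is to apply the identities of Cooper et al.\ \cite{Barrucand-Cooper-Hirschhorn,Chandrashekar-Cooper-Han}, together with the basic relations among $\varphi$, $\psi$ and $a(q)$, to decompose $R_d(q)$ into an Eisenstein part whose coefficients are finite nonnegative combinations of the divisor counts $d_{j,k}(\cdot)$ evaluated at arguments linear in $n$, plus, for the values of $d$ where the relevant space of modular forms is larger, an explicit eta-quotient cusp correction. Carrying this out for each $d\in\{3,4,5,6,7,8\}$ yields a closed formula for $r_{1,1,3,d}(n)$ in terms of the $d_{j,k}$, which is exactly the shape needed both for the universality statement and for the ``applications'' promised in the introduction.

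The main obstacle is the positivity step. When the decomposition is purely Eisenstein, $r_{1,1,3,d}(n)$ is a nonnegative combination of divisor sums and one only has to rule out simultaneous vanishing, i.e.\ show the governing argument always admits a divisor in the prescribed residue class; this is a congruence bookkeeping argument on $8n+5+d$. When a cusp form is present its coefficients can be negative, so I would bound them (via the Hecke/Deligne estimate, or directly from the eta-quotient) against the Eisenstein main term, which grows like a positive power of $n$; this secures $r_{1,1,3,d}(n)>0$ for all $n$ beyond an explicit bound, after which finitely many small $n$ are verified by direct computation. Controlling these cusp contributions uniformly in $d$, and excluding the vanishing of the divisor sums, is where the real work lies.

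For part $(2)$ the two directions are handled separately. The first family $(1,1,1,d),\dots,(1,2,4,d)$ contains a universal ternary leading triple by Liouville's Theorem~\ref{thm:Liouville}, so taking $w=0$ (whence $dt_w=0$) these are universal for every $d$; the family $(1,1,3,d)$ with $3\le d\le 8$ is exactly part $(1)$. For the converse I would run the standard escalation: representing $1$ forces $a=1$, representing $2$ forces $b\le 2$, and so on, producing a finite list of candidate vectors, each of which is either in Liouville's list augmented by $dt_w$ or reducible by reordering to one already listed. The decisive new obstruction is the number $8$, which is the smallest integer not of the form $t_x+t_y+3t_z$: since $8-dt_w<0$ for every $w\ge 1$ once $d\ge 9$, the vector $(1,1,3,d)$ fails to represent $8$ for $d\ge 9$, whereas for $3\le d\le 8$ one has $8-d\ge 0$ with $8-d$ representable by $t_x+t_y+3t_z$. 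This pins down the range $3\le d\le 8$ exactly and, combined with part $(1)$ and Theorem~\ref{thm:Liouville}, completes the classification.
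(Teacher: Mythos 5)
Your part (2) is fine: the escalation on $n=1,2,4,5,8$ and the observation that $8$ is the smallest integer not of the form $t_x+t_y+3t_z$ (so that $(1,1,3,d)$ fails for $d\ge 9$ while $8-d$ remains representable for $3\le d\le 8$) is essentially the paper's own argument, modulo part (1) and Liouville. The problem is part (1), which is where all the content of the theorem lies. What you have written there is a plan, not a proof: the Eisenstein-plus-cusp decomposition of $\psi(q)^2\psi(q^3)\psi(q^d)$ is never computed, no divisor-sum formula is exhibited, no bound on a cusp coefficient is established, and no finite check is performed. You acknowledge this yourself (``where the real work lies''), so as it stands the universality of $(1,1,3,d)$ is asserted, not proved.

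More seriously, the positivity mechanism you propose is unsound in exactly the hardest case, $d=3$. Your argument requires the Eisenstein main term to ``grow like a positive power of $n$'' so that it eventually dominates the Hecke/Deligne bound $O(n^{1/2+\epsilon})$ on the cusp part. But the quaternary form $x^2+y^2+3z^2+3w^2$ is anisotropic over $\mathbb{Q}_3$ (if $x^2+y^2=-3(z^2+w^2)$ nontrivially, the two sides have $3$-adic valuations of opposite parity, since $-1$ is not a square mod $3$). Consequently the local density at $3$ decays geometrically in $v_3(N)$, and the Eisenstein coefficient of $N=8(n+1)$ stays \emph{bounded} along sequences with $v_3(n+1)\to\infty$; indeed anisotropy forces every representation of such $N$ to have all variables divisible by $3$, so the (odd-variable) representation count satisfies an exact invariance under $N\mapsto 9N$ and does not grow at all. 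Along these progressions the inequality ``Eisenstein $\gg$ cusp bound'' is false, and your asymptotic argument cannot yield positivity for infinitely many $n$, no matter what explicit constant you compute. This is precisely why the paper does not argue asymptotically: it uses exact identities instead --- Dickson's classical theorem that $x^2+y^2+3z^2+3w^2$ is universal, the Baruah--Cooper--Hirschhorn relation expressing $t_{1,1,3,3}(n)$ through $r_{1,1,3,3}(n+1)$ and $r_{1,1,3,3}(2n+2)$, and the $2$-adic identity $t_{1,1,3,3}(2^kN+2^k-1)=2^k t_{1,1,3,3}(N)$ (proved from the theta relations for $a(q)$) to reduce odd $n$ to even $n$; the cases $d=6,7,8$ are then handled by elementary congruences mod $9$ against the ternary characterization of $t_x+t_y+3t_z$. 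To salvage your route for $d=3$ you would need an exact statement (e.g.\ vanishing or CM structure of the cusp part, or an invariance of the cusp coefficients under $N\mapsto 9N$), which is a different and harder input than the Deligne bound you invoke.
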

\begin{theorem}
\label{thm:a-c}
{\it
For fixed positive integers $a$ and $c$, set
$$
g^{a}_c(x,y,z)=at_x+c(y^2+yz+z^2), \,\,\textrm{with} \,\,x\in\mathbb{N}_0, \,\,y,z\in\mathbb{Z}.
$$
\begin{enumerate}\itemsep=0pt
\item[$(1)$]
The form $g^{a}_c$ represents all $n\in\mathbb{N}_0$ if and only if $(a,c)=(1,1).$
\item[$(2)$]
If the form $g^{a}_c$ represents $n=1,2,4,8$, it represents all $n\in\mathbb{N}_0$.
\end{enumerate}
}
\end{theorem}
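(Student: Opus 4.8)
The plan is to deduce both parts from one substantive fact—that $g^1_1(x,y,z)=t_x+(y^2+yz+z^2)$ represents every $n\in\mathbb{N}_0$—together with a short finite classification. Write $T=\{0,1,3,6,10,\dots\}$ for the triangular numbers and recall that the Eisenstein form $y^2+yz+z^2$ represents exactly those $n$ in whose factorization every prime $p\equiv 2\ (\mathrm{mod}\ 3)$ occurs to an even power; call this set $E=\{0,1,3,4,7,9,12,13,\dots\}$. The first step is to determine every pair $(a,c)$ for which $g^a_c$ represents all of $1,2,4,8$. Representing $1=at_x+c(y^2+yz+z^2)$ forces one summand to vanish and the other to equal $1$, hence $a=1$ or $c=1$. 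Imposing the value $2$ then leaves only $(a,c)\in\{(1,1),(1,2),(2,1)\}$, since the sole relevant decompositions are $2=t_1+1\cdot 1$, $2=t_0+2\cdot 1$, and $2=2t_1+1\cdot 0$. Finally $(1,2)$ cannot represent $4$ (none of $4,\,3,\,1$ is twice an element of $E$) and $(2,1)$ cannot represent $8$ (none of $8,\,6,\,2$ lies in $E$), so $(1,1)$ is the unique pair representing $\{1,2,4,8\}$.

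This classification already yields most of the claims. For the ``only if'' direction of part $(1)$, any $(a,c)\neq(1,1)$ fails to represent one of $1,2,4,8$ and so is not universal. For part $(2)$, the hypothesis that $g^a_c$ represents $1,2,4,8$ forces $(a,c)=(1,1)$, so part $(2)$ collapses to the single assertion that $g^1_1$ is universal; the ``if'' direction of part $(1)$ is the same assertion. Thus everything rests on the remaining step.

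The remaining step is to show that the number $r(n)$ of representations of $n$ by $t_x+(y^2+yz+z^2)$ is positive for every $n$. Its generating function is $\psi(q)\,a(q)$, and one natural route combines the theta-function identities of Cooper et al.\ (for instance $a(q)=\varphi(q)\varphi(q^3)+4q\psi(q^2)\psi(q^6)$) to extract a closed arithmetic formula for $r(n)$ from which positivity is visible. An equivalent, more hands-on route uses $8t_x+1=(2x+1)^2$: multiplying by $8$ and adding $1$, universality of $g^1_1$ is equivalent to the statement that the ternary form
\[
F(X,y,z)=X^2+8(y^2+yz+z^2)
\]
represents every positive integer $\equiv 1\ (\mathrm{mod}\ 8)$, the oddness of $X$ being then automatic. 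One would analyze $F$, whose determinant is $48$, through its genus and the local representation conditions, verifying that no integer in the class $1\ (\mathrm{mod}\ 8)$ is omitted.

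The main obstacle is precisely this last step: guaranteeing $r(n)>0$ for all $n$ with no exceptions. In the ternary-form approach the delicate point is ruling out exceptional integers arising from the spinor genus or from finitely many locally excluded square classes; in the theta approach the work lies in massaging the cited product identities into a manifestly nonnegative coefficient formula and separately confirming the finitely many small values. Once $r(n)>0$ is established, the proof closes by assembling the elementary finite verifications of the first paragraph.
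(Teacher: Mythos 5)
Your finite classification (necessity) is correct and matches the paper: $n=1$ forces $a=1$ or $c=1$; $n=2$ cuts the list to $(1,1),(1,2),(2,1)$; then $(1,2)$ fails at $4$ and $(2,1)$ fails at $8$. You also correctly observe that both part $(1)$ ``if'' and part $(2)$ collapse to a single assertion: $g^1_1(x,y,z)=t_x+(y^2+yz+z^2)$ represents every $n\in\mathbb{N}_0$. The problem is that you never prove this assertion. You sketch two possible routes (extracting a closed coefficient formula from theta identities, or genus/spinor-genus analysis of the ternary form $X^2+8(y^2+yz+z^2)$ on the progression $1 \bmod 8$) and then explicitly concede that ``the main obstacle is precisely this last step.'' That step is the entire substance of the theorem, so what you have is a reduction, not a proof; and the routes you gesture at are genuinely harder than what is needed (a closed formula for the representation number is not readily available, and the ternary-form route would require ruling out spinor-exceptional square classes, which you do not attempt).

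The paper closes this gap with a short comparison argument that avoids both of your proposed routes. Multiplying Cooper's identity $a(q)=\varphi(q)\varphi(q^3)+4q\psi(q^2)\psi(q^6)$ by $\psi(q)$ gives
\begin{equation*}
\psi(q)\,a(q)=\varphi(q)\varphi(q^3)\psi(q)+4q\,\psi(q)\psi(q^2)\psi(q^6),
\end{equation*}
i.e. $B^1_1(n)=m_{1,3\text{-}1}(n)+4\,t_{1,2,6}(n-1)$, where $m_{1,3\text{-}1}(n)$ counts representations $n=x^2+3y^2+t_z$. Since the correction term is nonnegative, positivity of $B^1_1(n)$ follows at once from the theorem of Guo, Pan and Sun that $x^2+3y^2+t_z$ is universal. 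So the missing idea in your proposal is exactly this: do not try to compute or locally analyze the representation numbers of $t_x+(y^2+yz+z^2)$; instead dominate them from below, via a theta identity with nonnegative coefficients, by the representation numbers of a mixed form already known to be universal. Without that (or some completed substitute), your argument does not establish either direction that matters.
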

\begin{theorem}
\label{thm:a,b-c}
{\it
For fixed positive integers, $a$, $b$, and $c$ with $a\le b,$ set
$$
g^{a,b}_c(x,y,z,w)=at_x+bt_y+c(z^2+zw+w^2), \,\,\textrm{with} \,\,x,y\in\mathbb{N}_0, \,\,z,w\in\mathbb{Z}.
$$
\begin{enumerate}\itemsep=0pt
\item[$(1)$]
The form $g^{a,b}_c$ represents all $n\in\mathbb{N}_0$ if and only if
\begin{equation*}
(a,b,c)=
\begin{cases}
(1,b,1),  \,&(b\in\mathbb{N}),  \\
(2,b,1), \,&(b=2,3,4,5,6,7,8),  \\
(1,b,2), \,&(b=1,2,3,4),  \\
(1,2,3), \,&              \\
(1,b,4), \,&(b=1,2),  \\
(1,1,5). \,&
\end{cases}
\end{equation*}
\item[$(2)$]
If the form $g^{a,b}_c$ represents $n=1,2,4,5,8,$ it represents all $n\in\mathbb{N}_0$.
\end{enumerate}
}
\end{theorem}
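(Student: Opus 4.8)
The plan is to translate the representation problem into the language of theta series and then reduce universality to a finite verification via part $(2)$. Writing $r^{a,b}_c(n)$ for the number of representations of $n$ by $g^{a,b}_c$, the identities $\sum_{x\in\mathbb{N}_0} q^{t_x}=\psi(q)$ and $\sum_{z,w\in\mathbb{Z}} q^{z^2+zw+w^2}=a(q)$ give the generating function
$$\sum_{n\ge 0} r^{a,b}_c(n)\,q^n=\psi(q^a)\,\psi(q^b)\,a(q^c),$$
where $a(\cdot)$ is the Eisenstein theta series of the introduction. Thus $g^{a,b}_c$ represents all $n\in\mathbb{N}_0$ exactly when every coefficient of this product is positive. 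Statement $(2)$ asserts that positivity of the coefficients at $n=1,2,4,5,8$ already forces positivity everywhere, so I would prove $(2)$ first and then read off $(1)$ from it: the ``if'' direction of $(1)$ is contained in the universality proofs needed for $(2)$, while the ``only if'' direction is its contrapositive, since every triple outside the list will be shown to miss one of $1,2,4,5,8$.

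For part $(2)$ I would run an escalation argument on the value sets $a\{0,1,3,6,\dots\}$, $b\{0,1,3,6,\dots\}$, and $c\{0,1,3,4,7,\dots\}$ (the last being $c$ times the Loeschian numbers $z^2+zw+w^2$). Representing $n=1$ immediately forces $\min(a,c)=1$, i.e. $a=1$ or $c=1$, since the only ways to reach $1$ are $t_1$ with coefficient $a=1$ or $cL$ with $c=1,\,L=1$. Imposing representability of $2$, then $4$, $5$, and $8$ in turn cuts the surviving triples down by elementary Diophantine casework; the families $(1,b,1)$, $(2,b,1)$, $(1,b,2)$, $(1,b,4)$ persist (within the stated $b$-ranges) precisely because the small targets can be assembled without ever invoking the $b t_y$ term, whereas every other choice of $(a,b,c)$ fails one of these five size/congruence conditions. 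The outcome of this bookkeeping must be shown to be exactly the list displayed in $(1)$.

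It then remains to prove that each surviving triple is genuinely universal. The family $(1,b,1)$ is immediate: setting $y=0$ shows $g^{1,b}_1$ contains $t_x+(z^2+zw+w^2)=g^1_1$, which is universal by Theorem \ref{thm:a-c}$(1)$. For the finitely many remaining triples the $b t_y$ term is essential, since $g^a_c$ is non-universal for $(a,c)\ne(1,1)$, and I would use the quadratic reduction
$$8n+(a+b)=a(2x+1)^2+b(2y+1)^2+2c(2z+w)^2+6c\,w^2,$$
obtained from $8t_x+1=(2x+1)^2$ and $4(z^2+zw+w^2)=(2z+w)^2+3w^2$, to recast universality as the statement that a fixed quaternary form represents every integer in a prescribed arithmetic progression. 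These progression-restricted statements are exactly the shape handled by the theta identities of Cooper et al.\ \cite{Barrucand-Cooper-Hirschhorn, Chandrashekar-Cooper-Han}, which I would combine with Liouville's Theorem \ref{thm:Liouville} to extract explicit coefficient formulas for each product $\psi(q^a)\psi(q^b)a(q^c)$.

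The main obstacle is not the enumeration but this last step: establishing strict positivity of the coefficients, rather than mere nonvanishing, for the boundary cases. For instance, pinning down that $(2,b,1)$ is universal for $b\le 8$ but fails for $b\ge 9$, and that $(1,b,2)$ is universal only for $b\le 4$, requires exact formulas for the relevant theta products together with a matching identification of the first missed value once $b$ exceeds the threshold. I expect the delicate part to be expressing each $r^{a,b}_c(n)$ in terms of divisor sums $d_{j,k}(m)$ (via the Cooper et al.\ and Liouville evaluations) and then proving that these divisor sums never vanish on the relevant progression; this positivity, argued case by case across the list, is where the real work lies.
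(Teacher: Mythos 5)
Your enumeration of the candidate triples is sound and matches the paper's necessity argument: testing $n=1,2,4,5,8$ does cut the possibilities down to exactly the displayed list, and this same casework yields part $(2)$ once sufficiency is known. The genuine gap is in the sufficiency, which you yourself flag as ``where the real work lies'' but for which your proposed mechanism would not succeed. Exact divisor-sum evaluations of the kind you hope to extract from Liouville's theorem and from the cited Cooper et al.\ papers do not exist for the products $\psi(q^a)\psi(q^b)a(q^c)$ in the required generality: Theorem \ref{thm:Liouville} is a representability statement with no formula attached, and the Cooper et al.\ identities concern only the very special combinations $x^2+y^2+3z^2$ and $t_x+t_y+3t_z(+3t_w)$; nothing in those references evaluates, say, the coefficients of $\psi(q^2)\psi(q^7)a(q)$ as divisor sums. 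Moreover, even where Eisenstein-type formulas do exist, proving their positivity on a prescribed progression is essentially the original representability question restated, so your plan is circular precisely at the decisive step.

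What actually closes the gap in the paper is a different two-step mechanism. First, for each listed $c$ one determines \emph{exactly} which integers the ternary part $at_x+c(z^2+zw+w^2)$ represents (Propositions \ref{prop:B-a=2,c=1}, \ref{prop:B-1-2}, \ref{prop:B-1-3} and \ref{prop:B-1-5}); this is done by transferring Dickson's classifications of the diagonal ternary forms $x^2+4y^2+12z^2$, $x^2+16y^2+48z^2$, $x^2+24y^2+72z^2$ and $x^2+40y^2+120z^2$ through the theta identities (\ref{eqn:Cooper;(a)}), (\ref{eqn:Cooper;(d)})--(\ref{eqn:Cooper;(f)}), yielding explicit excluded congruence classes (mod $9$, $5$, or $25$). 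Second, the excluded classes are absorbed by choosing specific small values of $y$ so that $n-bt_y$ lands in a represented class; this step is elementary once the ternary characterization is in hand. Two cases, $(a,b,c)=(2,6,1)$ and $(1,3,2)$, resist this patching and instead require the universality of $t_x+t_y+3t_z+3t_w$ (Theorem \ref{thm:(1,1,3,3)} with $d=3$), which is the paper's main theorem and rests on the relation $r_{1,1,3}(8n+5)=16\,t_{1,1,3}(n)$ together with the doubling identity $t_{1,1,3,3}(2N+1)=2\,t_{1,1,3,3}(N)$. None of this structure --- the exact ternary characterizations, the patching by $bt_y$, or the prior quaternary triangular-number theorem --- appears in your proposal, and without it the case-by-case positivity you defer cannot be carried out.
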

\par
The remainder of this paper is organized as follows. In Section 2, we introduce notation that will be used throughout our paper. In Section 3, we prove Theorem \ref{thm:(1,1,3,3)} for $d=3,6,7,8$, and in Section 4, we prove Theorem \ref{thm:a-c}. In Section 5, we apply Theorem \ref{thm:(1,1,3,3)} to obtain the sufficiency of Theorem \ref{thm:a,b-c}(1). In Section 6, we prove Theorem \ref{thm:a,b-c}.

\subsubsection*{Remark 1}
For $d=4,5,$ Theorem \ref{thm:(1,1,3,3)}(1) follows from Theorem \ref{thm:Liouville} of Liouville.

\subsubsection*{Remark 2}
For fixed positive integers, $a$, $b$, and $c$, set
$$
f^{a,b,c}(x,y,z)=ax^2+by^2+cz^2, \,\,
f^{a}_c(x,y,z)=ax^2+c(y^2+yz+z^2),
$$
for $x,y,z\in\mathbb{Z}.$ In \cite[p. 104]{Dickson-2}, Dickson proved that there exist no positive integers $a$, $b$, and $c$ such that $f^{a,b,c}$ can represent all nonnegative integers. In \cite{Matsuda}, we showed that there exist no positive integers $a$ and $c$ such that $f^{a}_c$ represents all nonnegative integers.

\subsubsection*{Remark 3}
For fixed positive integers $a$, $b$, and $c$, set
$$
f^{a,b}_c(x,y,z)=ax^2+by^2+c(z^2+zw+w^2),
$$
for $x,y,z,w\in\mathbb{Z}.$ In \cite{Matsuda}, we determined $(a,b,c)$, where $a\le b$, such that $f^{a,b}_c$ represents all nonnegative integers. 

\section{Notation and preliminary results}

\subsection{Notation}
For fixed positive integers $a$, $b$, $c$, and $d$ and each $n\in\mathbb{N}_0$, we define
\begin{align*}
r_{a,b,c}(n)=&\sharp\{(x,y,z)\in\mathbb{Z}^3 \,|\, n=ax^2+by^2+cz^2\},  \\
r_{a,b,c,d}(n)=&\sharp\{(x,y,z,w)\in\mathbb{Z}^3 \,|\, n=ax^2+by^2+cz^2+dw^2\},  \\
t_{a,b,c}(n)=&\sharp\{(x,y,z)\in\mathbb{N}_0^3 \,|\, n=at_x+bt_y+ct_z\},  \\
t_{a,b,c,d}(n)=&\sharp\{(x,y,z,w)\in\mathbb{N}_0^4 \,|\, n=at_x+bt_y+ct_z+dt_w\},  \\
m_{a\text{-}b, c}(n)
=&\sharp
\left\{
(x,y,z)\in\mathbb{Z}\times\mathbb{N}_0^2 \, | \,n=a x^2+b t_y+c t_z
\right\},  \\
m_{a,b\text{-}c}(n)
=&\sharp
\left\{
(x,y,z)\in\mathbb{Z}^2\times\mathbb{N}_0 \, | \,n=a x^2+by^2+c t_z
\right\},  \\
A^a_c(n)=&\sharp
\left\{
(x,y,z)\in\mathbb{Z}^3 \,| \,
n=ax^2+c(y^2+yz+z^2)
\right\},  \\
A^{a,b}_c(n)=&\sharp
\left\{
(x,y,z,w)\in\mathbb{Z}^4 \,| \,
n=ax^2+by^2+c(z^2+zw+w^2)
\right\},  \\
B^a_c(n)=&\sharp
\left\{
(x,y,z)\in\mathbb{N}_0\times\mathbb{Z}^2 \,| \,
n=at_x+c(y^2+yz+z^2)
\right\},  \\
B^{a,b}_c(n)=&\sharp
\left\{
(x,y,z,w)\in\times\mathbb{N}_0^2\times\mathbb{Z}^2 \,| \,
n=at_x+bt_y+c(z^2+zw+w^2)
\right\}.
\end{align*}

\subsection{Ramanujan's theory of theta functions}
From Baruah, Cooper, and Hirschhorn \cite{Baruah-Cooper-Hirschhorn}, recall that
\begin{align}
\varphi(q)=& \,\,  \varphi(q^4)+2q\psi(q^8), \label{eqn:Cooper;(a)} \\
\varphi(q)^2=& \, \, \varphi(q^2)^2+4q\psi(q^4)^2,  \label{eqn:Cooper;(b)}  \\
\varphi(q)\psi(q^2)=& \,\,  \psi(q)^2,    \label{eqn:Cooper;(c)}  \\
\varphi(q)\varphi(q^3)=& \,\,  a(q^4)+2q\psi(q^2)\psi(q^6),  \label{eqn:Cooper;(d)}  \\
a(q)=& \, \, \varphi(q)\varphi(q^3)+4q\psi(q^2)\psi(q^6),  \label{eqn:Cooper;(e)}  \\
a(q)=& \, \, a(q^4)+6q\psi(q^2)\psi(q^6).   \label{eqn:Cooper;(f)}
\end{align}

\section{Proof of Theorem \ref{thm:(1,1,3,3)} }

\subsection{Preliminary results}
Using Ramanujan's theory of theta functions, Adiga, Cooper, and Han \cite{Adiga-Cooper-Han} proved the following theorem:
\begin{theorem}
\label{thm:relation}
{\it Let $n\in\mathbb{N}_0.$ Then,
$$
r_{1,1,3}(8n+5)=16 t_{1,1,3}(n).
$$
}
\end{theorem}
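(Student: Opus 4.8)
The plan is to pass to generating functions and carry out an $8$-dissection. Since $\varphi(q)^2\varphi(q^3)=\sum_{x,y,z\in\mathbb{Z}}q^{x^2+y^2+3z^2}$ and $\psi(q)^2\psi(q^3)=\sum_{x,y,z\in\mathbb{N}_0}q^{t_x+t_y+3t_z}$, we have
$$\sum_{n\ge 0}r_{1,1,3}(n)\,q^n=\varphi(q)^2\varphi(q^3),\qquad \sum_{n\ge 0}t_{1,1,3}(n)\,q^n=\psi(q)^2\psi(q^3).$$
Replacing $q$ by $q^8$ in the second formula gives $\sum_{n\ge0}t_{1,1,3}(n)\,q^{8n}=\psi(q^8)^2\psi(q^{24})$, so the claimed equality $r_{1,1,3}(8n+5)=16\,t_{1,1,3}(n)$ is equivalent to showing that the sum of all terms of $\varphi(q)^2\varphi(q^3)$ whose exponent is $\equiv 5\pmod 8$ equals $16\,q^5\,\psi(q^8)^2\psi(q^{24})$. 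The whole proof thus reduces to isolating this one residue class.

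First I would insert the $2$-dissection \eqref{eqn:Cooper;(a)}, $\varphi(q)=\varphi(q^4)+2q\psi(q^8)$, into both factors, using it directly and again with $q\mapsto q^3$ to obtain $\varphi(q^3)=\varphi(q^{12})+2q^3\psi(q^{24})$. Expanding the product yields six monomials in the base series $\varphi(q^4),\psi(q^8),\varphi(q^{12}),\psi(q^{24})$. The key observation is that these base series occupy fixed residue classes modulo $8$: the exponents of $\varphi(q^4)$ and of $\varphi(q^{12})$ lie in $\{0,4\}$, while those of $\psi(q^8)$ and $\psi(q^{24})$ are $\equiv 0$. Combining these classes with the explicit power of $q$ in front of each of the six monomials, a short bookkeeping shows that only two terms can land in the class $5\pmod 8$: the term $8q^5\psi(q^8)^2\psi(q^{24})$, which lies entirely in that class, and the cross term $4q\,\varphi(q^4)\psi(q^8)\varphi(q^{12})$, whose exponents split between the classes $1$ and $5$.

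The decisive step is to pick out the class-$5$ part of this cross term, and for that I would apply the second dissection \eqref{eqn:Cooper;(d)}, $\varphi(q)\varphi(q^3)=a(q^4)+2q\psi(q^2)\psi(q^6)$, with $q\mapsto q^4$, giving $\varphi(q^4)\varphi(q^{12})=a(q^{16})+2q^4\psi(q^8)\psi(q^{24})$. Substituting this rewrites the cross term as $4q\,\psi(q^8)\,a(q^{16})+8q^5\psi(q^8)^2\psi(q^{24})$; because $a(q^{16})$ and $\psi(q^8)$ both carry exponents $\equiv 0\pmod 8$, the first summand sits in the class $1$ and contributes nothing, while the second sits in the class $5$. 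Hence the class-$5$ part of $\varphi(q)^2\varphi(q^3)$ is $8q^5\psi(q^8)^2\psi(q^{24})+8q^5\psi(q^8)^2\psi(q^{24})=16\,q^5\psi(q^8)^2\psi(q^{24})$, which is exactly the identity sought. I expect the genuinely clever move to be this use of \eqref{eqn:Cooper;(d)} on the cross term, since without it one is left with the unwieldy factor $\varphi(q^4)\varphi(q^{12})$ and no transparent way to read off its contribution; the remaining steps are the routine dissection and the modulo-$8$ bookkeeping. As a sanity check, at $n=0$ the right side is $16\,t_{1,1,3}(0)=16$, matching $r_{1,1,3}(5)=16$, whose sixteen representations are the eight all-odd triples $(\pm1,\pm1,\pm1)$ for $1+1+3$ together with the eight triples for $4+1+0$.
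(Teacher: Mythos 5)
Your proof is correct, and it is worth noting that it does more than the paper does: the paper does not prove Theorem \ref{thm:relation} at all, but simply defers to the cited works of Barrucand--Cooper--Hirschhorn and Chandrashekar--Cooper--Han. Your argument is a genuine, self-contained derivation, and I checked the details: the six-term expansion of $\varphi(q)^2\varphi(q^3)$ obtained from \eqref{eqn:Cooper;(a)} (applied at $q$ and at $q^3$) does place the terms in the residue classes you claim --- the terms $\varphi(q^4)^2\varphi(q^{12})$ and $8q^4\varphi(q^4)\psi(q^8)\psi(q^{24})$ in classes $\{0,4\}$, the term $2q^3\varphi(q^4)^2\psi(q^{24})$ in $\{3,7\}$, the term $4q^2\psi(q^8)^2\varphi(q^{12})$ in $\{2,6\}$, so that only $8q^5\psi(q^8)^2\psi(q^{24})$ and the cross term $4q\,\varphi(q^4)\psi(q^8)\varphi(q^{12})$ (classes $\{1,5\}$) can contribute to $5 \bmod 8$ --- and your use of \eqref{eqn:Cooper;(d)} at $q^4$ correctly splits that cross term into a class-$1$ piece $4q\,\psi(q^8)a(q^{16})$ and a class-$5$ piece $8q^5\psi(q^8)^2\psi(q^{24})$, giving the total $16q^5\psi(q^8)^2\psi(q^{24})$ and hence $r_{1,1,3}(8n+5)=16\,t_{1,1,3}(n)$. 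In spirit this is the same theta-dissection technique as in the papers the author cites (and your key step, dissecting $\varphi(q^4)\varphi(q^{12})$ via the Borwein function $a$, is exactly the kind of identity those papers revolve around), but relative to the present paper your proposal buys self-containedness: it proves the theorem using only the preliminary identities already listed in Section 2, whereas the paper's ``proof'' is a pointer to the literature.
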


\begin{proof}
For a detailed proof, see Barrucand et al. \cite{Barrucand-Cooper-Hirschhorn} and Adiga et al. \cite{Adiga-Cooper-Han}.
\end{proof}

Dickson \cite[p. 112-113]{Dickson-2} proved the following result:
\begin{theorem}
\label{thm:three-or four-squares}
{\it
\quad
\begin{enumerate}\itemsep=0pt
\item[$(1)$]
A nonnegative integer $n\in\mathbb{N}_0$ can be written as $x^2+y^2+3z^2$ for $x,y,z\in\mathbb{Z}$ if and only if $n\neq 9^k(9l+6)$ for $k,l\in\mathbb{N}_0.$
\item[$(2)$]
Every $n\in\mathbb{N}_0$ can be written as $x^2+y^2+3z^2+3w^2$ for $x,y,z,w\in\mathbb{Z}.$
\end{enumerate}
}
\end{theorem}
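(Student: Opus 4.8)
The plan is to treat the two parts separately, proving the ternary statement (1) first by a local--global argument and then deducing the quaternary statement (2) from it by an elementary reduction. For (1) the strategy is to invoke the Hasse--Minkowski principle together with the fact that the form $x^2+y^2+3z^2$ is alone in its genus: once class number one is established, a nonnegative integer $n$ is represented globally if and only if it is represented $p$-adically for every prime $p$, the real place being harmless since the form is positive definite.

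Next comes the local analysis. At every odd prime $p\neq 3$ and at $p=2$ one checks that $x^2+y^2+3z^2$ represents every $p$-adic integer (for instance by exhibiting enough unit values and applying Hensel's lemma), so no obstruction arises there. All the action is at $p=3$: since $-1$ is a nonresidue modulo $3$, the binary part $x^2+y^2$ is anisotropic over $\mathbb{Q}_3$ and represents exactly the elements of even $3$-adic valuation, whence $x^2+y^2+3z^2$ is anisotropic over $\mathbb{Q}_3$ and represents precisely three of the four square classes of $\mathbb{Q}_3^{\times}$, namely $1,u,3$ (with $u$ a unit nonresidue), but not $3u$. Translating back to integers, the unrepresented class $3u$ consists exactly of those $n$ with $v_3(n)$ odd and $3$-free part $\equiv 2 \pmod 3$; writing $9l+6=3(3l+2)$ one sees this is precisely the set $9^k(9l+6)$, so the $3$-adic obstruction reproduces the stated exceptional set.

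The main obstacle is the class-number-one input. I would verify it by reduction theory: enumerate the reduced positive ternary forms of determinant $3$ and check that $x^2+y^2+3z^2$ is the only one in its genus. Care is needed because there is a competing determinant-$3$ form $x^2+2(y^2+yz+z^2)$, but this one represents $6$ and hence lies in a different genus, so it does not interfere. Granting class number one, part (1) follows.

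Finally, part (2) is deduced from (1) with no further machinery. If $n$ is not of the form $9^k(9l+6)$, then $n=x^2+y^2+3z^2$ already (take $w=0$). If $n=9l+6$ is exceptional, then $n-3=9l+3\equiv 3 \pmod 9$ is not exceptional, so $n=(x^2+y^2+3z^2)+3\cdot 1^2$. If $n=9^k(9l+6)=3^{2k+1}(3l+2)$ with $k\geq 1$, then subtracting $3(3^k)^2=3^{2k+1}$ yields $3^{2k+1}(3l+1)$, whose $3$-free part is $\equiv 1 \pmod 3$ and is therefore not exceptional, so $n=(x^2+y^2+3z^2)+3(3^k)^2$. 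In every case $n$ is represented by $x^2+y^2+3z^2+3w^2$, as required.
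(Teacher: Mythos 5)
Your proposal is correct, but it takes a genuinely different route from the paper, because the paper contains no proof of this statement at all: Theorem \ref{thm:three-or four-squares} is simply quoted from Dickson \cite[pp.112-113]{Dickson-2}, and the author relies on that citation as the source of both parts. Your argument is an actual self-contained proof: part (1) by the local--global method (positive definiteness, the one-class-genus property of $x^2+y^2+3z^2$ checked by reduction theory among determinant-$3$ forms, and the local computation showing the only obstruction is at $p=3$, where the form is anisotropic and misses exactly the square class $3u$, i.e.\ the integers $3^{2k+1}m$ with $m\equiv 2 \pmod 3$, which is precisely the set $9^k(9l+6)$); and part (2) by the clean descent of subtracting $3\cdot(3^k)^2$ from an exceptional $n=3^{2k+1}(3l+2)$, leaving $3^{2k+1}(3l+1)$, whose $3$-free part is $\equiv 1\pmod 3$ and hence non-exceptional. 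All of these steps check out. One technical point you should tighten: the genus-representation theorem you invoke requires representability over $\mathbb{Z}_p$ for every $p$, whereas your analysis at $p=3$ is phrased in terms of square classes of $\mathbb{Q}_3^{\times}$; for this particular form $\mathbb{Q}_3$- and $\mathbb{Z}_3$-representability of a $3$-adic integer coincide (every unit is already represented by $x^2+y^2$ over $\mathbb{Z}_3$ via Hensel, and the odd-valuation case reduces recursively), but that identification deserves a sentence. As for what each approach buys: the paper's citation is instant and keeps the exposition short, but your proof makes transparent exactly where the exceptional set $9^k(9l+6)$ comes from, and the part-(2) reduction is more illuminating than an appeal to Dickson's tables.
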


Using Theorems \ref{thm:relation} and \ref{thm:three-or four-squares}, the following theorem is obtained:

\begin{theorem}
\label{thm:(1,1,3)}
{\it
A nonnegative integer $n\in\mathbb{N}_0$ can be written as $t_x+t_y+3t_z$ for $x,y,z\in\mathbb{N}_0$ if and only if $n$ satisfies one of the following conditions:
\begin{enumerate}\itemsep=0pt
\item[$(1)$] $n\not\equiv 5, 8 \,(\mathrm{mod} \,9),$
\item[$(2)$] $n\equiv 5 \,(\mathrm{mod}\,9)$ and $8n+5=9^k(8N+5)$ and $N\not\equiv 5, 8\, (\mathrm{mod} \,9)$ for $k\in\mathbb{N}$ and $N\in\mathbb{N}_0.$
\end{enumerate}
}
\end{theorem}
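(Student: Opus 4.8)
The plan is to transfer the problem to the ternary quadratic form $x^2+y^2+3z^2$ through the two results already in hand, and then to translate Dickson's exceptional set $\{9^k(9l+6)\}$ into a congruence condition on $n$. First I would note that $n=t_x+t_y+3t_z$ is solvable for some $x,y,z\in\mathbb{N}_0$ if and only if $t_{1,1,3}(n)>0$, which by Theorem \ref{thm:relation} is equivalent to $r_{1,1,3}(8n+5)>0$, i.e.\ to $8n+5$ being represented by $x^2+y^2+3z^2$. By Theorem \ref{thm:three-or four-squares}(1) this representation exists if and only if $8n+5\neq 9^k(9l+6)$ for all $k,l\in\mathbb{N}_0$. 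Thus the entire statement reduces to deciding, for each $n$, whether $8n+5$ belongs to the exceptional set.

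The main computation is the residue table of $8n+5$ modulo $9$: for $n\equiv 0,1,2,3,4,5,6,7,8\pmod 9$ one gets $8n+5\equiv 5,4,3,2,1,0,8,7,6\pmod 9$. Writing $v_3(m)$ for the exponent of $3$ in $m$, every member $9l+6=3(3l+2)$ of the exceptional set has $v_3=1$ with $3$-free part $3l+2\equiv 2\pmod 3$; more generally $m$ is exceptional exactly when $v_3(m)$ is odd and $m/3^{v_3(m)}\equiv 2\pmod 3$. For $n\not\equiv 5,8\pmod 9$ the table shows $8n+5$ is either coprime to $3$, or (when $n\equiv 2$) satisfies $8n+5\equiv 3\pmod 9$, so that $v_3=1$ but the $3$-free part is $\equiv 1\pmod 3$; in neither situation is $8n+5$ exceptional, which gives condition (1). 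When $n\equiv 8\pmod 9$ we have $8n+5\equiv 6\pmod 9$, which is exceptional with $k=0$, so these $n$ are not representable and are correctly omitted from both (1) and (2).

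The remaining case $n\equiv 5\pmod 9$, where $8n+5\equiv 0\pmod 9$, is the crux and requires descending through powers of $9$. The key observation is that $9\equiv 1\pmod 8$, so whenever $9^k\mid 8n+5$ the quotient is again $\equiv 5\pmod 8$ and hence equals $8N+5$ for a unique $N\in\mathbb{N}_0$; this is exactly what makes the reduction $8n+5=9^k(8N+5)$ well defined. Choosing $k\in\mathbb{N}$ maximal yields $8N+5\not\equiv 0\pmod 9$, i.e.\ $N\not\equiv 5\pmod 9$, and since the removed factor $9^k=3^{2k}$ alters neither the parity of $v_3$ nor the $3$-free part modulo $3$, the number $8n+5$ is exceptional if and only if $8N+5$ is, i.e.\ if and only if $N\equiv 8\pmod 9$. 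Hence $8n+5$ is representable precisely when $N\not\equiv 5,8\pmod 9$, which is exactly condition (2). The delicate point throughout is this descent: one must verify both that the quotient stays of the shape $8N+5$ and that representability is genuinely invariant under removing $9^k$, and both facts rest on the single arithmetic coincidence that $9\equiv 1\pmod 8$ together with $9=3^2$. Assembling the three cases then completes the equivalence.
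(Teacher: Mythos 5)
Your proof is correct and takes essentially the same route as the paper: both reduce the problem via Theorem \ref{thm:relation} and Theorem \ref{thm:three-or four-squares} to deciding whether $8n+5$ lies in Dickson's exceptional set $\{9^k(9l+6)\}$, then split into the cases $n\not\equiv 5,8$, $n\equiv 8$, and $n\equiv 5 \pmod 9$ and perform the same descent $8n+5=9^k(8N+5)$ with $k$ maximal. Your explicit $3$-adic valuation criterion for the exceptional set and the remark that $9\equiv 1\pmod 8$ makes the descent well defined simply spell out details the paper leaves implicit.
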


\begin{proof}
From Theorems \ref{thm:relation} and \ref{thm:three-or four-squares}, note that $n=t_x+t_y+3t_z$ for $x,y,z\in\mathbb{N}_0$ if and only if $8n+5\neq 9^k(9l+6)$ where $k,l\in\mathbb{N}_0.$ Moreover, $8n+5\equiv 0\,(\mathrm{mod}\,9)$ if and only if $n\equiv 5\,(\mathrm{mod}\, 9).$
\par
First, consider the case when $n\not\equiv 5 \,(\mathrm{mod} \,9).$ From Theorem \ref{thm:three-or four-squares}, condition (1) is obtained because $8n+5\equiv 6 \,(\mathrm{mod}\, 9)$ if and only if $n\equiv 8\,(\mathrm{mod}\, 9).$
\par
Next, we consider the case when $n\equiv 5\,(\mathrm{mod}\, 9)$ and $n=9N+5$ for $N\in\mathbb{N}_0,$ which implies $8n+5=9(8N+5).$ Therefore, we set $8n+5=9^k(8N^{\prime}+5)$  for $k\in\mathbb{N}, N^{\prime}\in\mathbb{N}_0,$ where $N^{\prime}\not\equiv 5 \,(\mathrm{mod}\,9).$
From Theorem \ref{thm:three-or four-squares} and the discussion in the first paragraph, we obtain condition (2).
\end{proof}

\subsection{Proof of Theorem \ref{thm:(1,1,3,3)} (1) for $d=3$ }

\subsubsection{Preliminary results}
Before proving Theorem \ref{thm:(1,1,3,3)} (1) for $d=3$, we first obtain a useful number theoretic property for $t_{1,1,3,3}(n).$

\begin{theorem}
\label{thm:property}
{\it
Let $k\in\mathbb{N}$ and $n, N\in\mathbb{N}_0.$ Then,
$$
t_{1,1,3,3}(2^k N+(2^k-1))=2^k t_{1,1,3,3}(N),
$$
which implies
$
t_{1,1,3,3}(n) \equiv 0 (\bmod \, 2^k)
$
if $n\equiv -1 \,(\bmod \, 2^k).$
}
\end{theorem}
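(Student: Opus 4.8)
The plan is to reduce everything to the single base case $k=1$, namely
$$t_{1,1,3,3}(2N+1)=2\,t_{1,1,3,3}(N),\qquad N\in\mathbb{N}_0,$$
and then iterate. Granting this, set $M=2^{k-1}N+(2^{k-1}-1)$ so that $2M+1=2^{k}N+(2^{k}-1)$; the base case gives $t_{1,1,3,3}(2^{k}N+(2^{k}-1))=2\,t_{1,1,3,3}(M)$, and a straightforward induction on $k$ completes the identity. The divisibility assertion $t_{1,1,3,3}(n)\equiv0\bmod 2^{k}$ whenever $n\equiv-1\bmod 2^{k}$ is then immediate, since such $n$ are exactly those of the form $2^{k}N+(2^{k}-1)$.

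To prove the base case I would work with the generating function
$$F(q):=\sum_{n=0}^{\infty}t_{1,1,3,3}(n)\,q^{n}=\psi(q)^{2}\psi(q^{3})^{2},$$
which holds because $\psi(q)=\sum_{n\ge0}q^{t_{n}}$ enumerates a single triangular number while $\psi(q^{3})=\sum_{n\ge0}q^{3t_{n}}$ enumerates $3t_{n}$. Extracting the odd-indexed coefficients amounts to computing the odd part of $F$, and the identity $t_{1,1,3,3}(2N+1)=2\,t_{1,1,3,3}(N)$ is precisely the statement that
$$\tfrac12\bigl(F(q)-F(-q)\bigr)=2q\,F(q^{2}).$$
Thus it suffices to perform a $2$-dissection of $F$ and identify its odd part with $2q\,\psi(q^{2})^{2}\psi(q^{6})^{2}=2q\,F(q^{2})$.

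The dissection is supplied by the Ramanujan identities recalled in Section 2. First, \eqref{eqn:Cooper;(c)} gives $\psi(q)^{2}=\varphi(q)\psi(q^{2})$ and likewise $\psi(q^{3})^{2}=\varphi(q^{3})\psi(q^{6})$, whence $F(q)=\varphi(q)\varphi(q^{3})\,\psi(q^{2})\psi(q^{6})$. Next, \eqref{eqn:Cooper;(d)} rewrites $\varphi(q)\varphi(q^{3})=a(q^{4})+2q\,\psi(q^{2})\psi(q^{6})$, so that
$$F(q)=a(q^{4})\psi(q^{2})\psi(q^{6})+2q\,\psi(q^{2})^{2}\psi(q^{6})^{2}.$$
The key is a parity observation: every exponent occurring in $a(q^{4})\psi(q^{2})\psi(q^{6})$ is even (the three factors involve only powers $q^{4(\cdot)}$, $q^{2t_{n}}$, $q^{6t_{n}}$), whereas $2q\,\psi(q^{2})^{2}\psi(q^{6})^{2}$ contributes only odd powers of $q$. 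Hence the first summand is exactly the even part of $F$ and the second is exactly the odd part, so $\tfrac12\bigl(F(q)-F(-q)\bigr)=2q\,\psi(q^{2})^{2}\psi(q^{6})^{2}=2q\,F(q^{2})$, and comparing coefficients of $q^{2N+1}$ yields the base case.

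I expect the only genuine obstacle to be selecting the correct pair of identities for the dissection: once \eqref{eqn:Cooper;(c)} collapses the four $\psi$-factors into $\varphi(q)\varphi(q^{3})\psi(q^{2})\psi(q^{6})$ and \eqref{eqn:Cooper;(d)} splits $\varphi(q)\varphi(q^{3})$ into an even term plus $2q$ times an even term, the even/odd separation is forced and both the coefficient comparison and the induction on $k$ become purely formal. A minor technical point worth verifying is that these $q$-series converge for $|q|<1$ and that term-by-term extraction of the even and odd parts is legitimate, but this is routine for theta products.
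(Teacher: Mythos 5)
Your proposal is correct, and it reaches the key identity $t_{1,1,3,3}(2N+1)=2\,t_{1,1,3,3}(N)$ by a genuinely different (and somewhat cleaner) dissection than the paper's. The paper never dissects $F(q)=\psi(q)^2\psi(q^3)^2$ directly: instead it expands the auxiliary product $\psi(q^2)\psi(q^6)a(q)$ in two ways, once via (\ref{eqn:Cooper;(f)}) and once via (\ref{eqn:Cooper;(e)}) combined with (\ref{eqn:Cooper;(c)}), obtaining the two series identities (\ref{eqn:relation-tri-1133-a}) and (\ref{eqn:relation-tri-1133-b}); comparing odd coefficients and eliminating the auxiliary quantity $B^{2,6}_1(2N+1)$ yields $6\,t_{1,1,3,3}(N)=t_{1,1,3,3}(2N+1)+4\,t_{1,1,3,3}(N)$, hence the base case. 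You instead apply (\ref{eqn:Cooper;(c)}) (at $q$ and at $q^3$) to write $F(q)=\varphi(q)\varphi(q^3)\psi(q^2)\psi(q^6)$ and then split $\varphi(q)\varphi(q^3)$ by (\ref{eqn:Cooper;(d)}), reading off the odd part $2qF(q^2)$ by the parity of exponents. Since (\ref{eqn:Cooper;(d)}) is exactly the difference of (\ref{eqn:Cooper;(e)}) and (\ref{eqn:Cooper;(f)}), the two computations are algebraically equivalent, but yours is more economical: it avoids introducing $B^{2,6}_1$ altogether and makes the even/odd separation transparent. What the paper's detour buys is the byproduct relations involving $B^{2,6}_1$ and $B^{1,3}_2$, which it reuses later (the $b=6$ case of Theorem \ref{thm:2-triangular} and the $b=3$ case of Theorem \ref{thm:2-triangular(2)}); note that your even part likewise yields the bonus identity $t_{1,1,3,3}(2N)=B^{1,3}_2(N)$, which is precisely what (\ref{eqn:relation-tri-1133-a}) records. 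Your induction step, via $M=2^{k-1}N+(2^{k-1}-1)$ so that $2M+1=2^kN+(2^k-1)$, and the concluding divisibility argument (every $n\equiv-1\bmod 2^k$ with $n\ge 0$ has the form $2^kN+(2^k-1)$, $N\in\mathbb{N}_0$) are both complete and match the paper's in substance, differing only in the direction of the recursion.
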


\begin{proof}
Multiplying both sides of (\ref{eqn:Cooper;(f)}) by $\psi(q^2)\psi(q^6)$ yields
$$
\psi(q^2)\psi(q^6)a(q)=\psi(q^2)\psi(q^6)a(q^4)+6q\psi(q^2)^2 \psi(q^6)^2,
$$
which implies
\begin{equation}
\label{eqn:relation-tri-1133-a}
\sum_{n=0}^{\infty} B^{2,6}_1 (n) q^n
=
\sum_{N=0}^{\infty} B^{1,3}_2(N) q^{2N}+6\sum_{N=0}^{\infty} t_{1,1,3,3} (N) q^{2N+1}.
\end{equation}
Multiplying both sides of (\ref{eqn:Cooper;(e)}) by $\psi(q^2)\psi(q^6)$ yields
$$
\psi(q^2)\psi(q^6)a(q)=\varphi(q)\varphi(q^3)\psi(q^2)\psi(q^6)+4q\psi(q^2)^2\psi(q^6)^2.
$$
From (\ref{eqn:Cooper;(c)}), we obtain
$$
\psi(q^2)\psi(q^6)a(q)=\psi(q)^2 \psi(q^3)^2+4q\psi(q^2)^2 \psi(q^6)^2,
$$
which implies
\begin{equation}
\label{eqn:relation-tri-1133-b}
\sum_{n=0}^{\infty} B^{2,6}_1(n) q^n
=
\sum_{n=0}^{\infty} t_{1,1,3,3} (n) q^n+4\sum_{N=0}^{\infty} t_{1,1,3,3} (N) q^{2N+1}.
\end{equation}
\par
From (\ref{eqn:relation-tri-1133-a}) and (\ref{eqn:relation-tri-1133-b}), note that
$$
B^{2,6}_1 (2N+1)=6t_{1,1,3,3} (N)=t_{1,1,3,3} (2N+1)+4t_{1,1,3,3}(N),
$$
which implies 
\begin{equation}
\label{eqn:relation-tri-1133(3)}
t_{1,1,3,3} (2N+1)=2t_{1,1,3,3} (N).
\end{equation}
\par
Using induction, we complete the proof of the theorem. Clearly, for $k=1,$ the theorem holds. Now, suppose the theorem holds for $k$. In this case,
\begin{align*}
t_{1,1,3,3}(2^{k+1} N+(2^{k+1}-1))
=& \, t_{1,1,3,3}\left( 2^k\cdot(2N+1)+(2^k-1)  \right)  \\
=& \, 2^k t_{1,1,3,3}(2N+1) \\
=& \, 2^{k+1} t_{1,1,3,3}(N).
\end{align*}
\end{proof}

From Baruah, Cooper, and Hirschhorn \cite{Baruah-Cooper-Hirschhorn}, recall the following result:

\begin{theorem}$\mathrm{(Baruah, Cooper \,\,and \,\, Hirschhorn)}$
\label{thm:Baruah-Cooper-Hirschhorn}
{\it
For every $n\in\mathbb{N}_0,$
\begin{equation*}
t_{1,1,3,3}(n)
=
\begin{cases}
\displaystyle
\frac14 r_{1,1,3,3}(n+1) &{\it if} \,n \,{\it is \,\,even,} \\

\displaystyle
\frac18 \{  r_{1,1,3,3}(2n+2)-  r_{1,1,3,3}(n+1) \}  &{\it if} \,n \,{\it is \,\,odd.}
\end{cases}
\end{equation*}
}
\end{theorem}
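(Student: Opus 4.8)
The plan is to translate everything into the two generating functions
\[
T(q)=\sum_{n=0}^{\infty} t_{1,1,3,3}(n)\,q^{n}=\psi(q)^2\psi(q^3)^2,\qquad
R(q)=\sum_{n=0}^{\infty} r_{1,1,3,3}(n)\,q^{n}=\varphi(q)^2\varphi(q^3)^2,
\]
and to read both formulas off a single relation between them. First I would use (\ref{eqn:Cooper;(c)}) in the form $\psi(q)^2=\varphi(q)\psi(q^2)$, together with its image under $q\mapsto q^3$, to get $T(q)=\varphi(q)\varphi(q^3)\,P$ where $P:=\psi(q^2)\psi(q^6)$ and $P^2=T(q^2)$. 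Substituting (\ref{eqn:Cooper;(d)}), namely $\varphi(q)\varphi(q^3)=a(q^4)+2qP$, gives $T(q)=a(q^4)P+2qT(q^2)$, so that $a(q^4)P=T(q)-2qT(q^2)$. Squaring (\ref{eqn:Cooper;(d)}) and eliminating $a(q^4)P$ and $P^2$ then yields the master identity
\begin{equation*}
R(q)=a(q^4)^2+4qT(q)-4q^2T(q^2). \tag{$\ast$}
\end{equation*}
Since $a(q^4)^2$ and $q^2T(q^2)$ contain only even powers of $q$, the coefficient of $q^{n+1}$ in $(\ast)$ for even $n$ comes solely from $4qT(q)$, giving $r_{1,1,3,3}(n+1)=4\,t_{1,1,3,3}(n)$. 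This disposes of the even case immediately.

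For odd $n$ the term $a(q^4)^2$ no longer falls outside the relevant degrees, and controlling it is the whole point of the argument. Writing $A_k$ for the coefficient of $q^k$ in $a(q^4)^2$, I would extract from $(\ast)$ the coefficients of $q^{n+1}$ and $q^{2n+2}$ and use the recursion $t_{1,1,3,3}(2n+1)=2\,t_{1,1,3,3}(n)$ from (\ref{eqn:relation-tri-1133(3)}) to obtain $r_{1,1,3,3}(n+1)=A_{n+1}+4t_{1,1,3,3}(n)-4t_{1,1,3,3}(\tfrac{n-1}{2})$ and $r_{1,1,3,3}(2n+2)=A_{2n+2}+4t_{1,1,3,3}(n)$, hence
\[
r_{1,1,3,3}(2n+2)-r_{1,1,3,3}(n+1)=\big(A_{2n+2}-A_{n+1}\big)+4\,t_{1,1,3,3}\!\left(\tfrac{n-1}{2}\right).
\]
Now $A_{2n+2}-A_{n+1}$ is precisely the coefficient of $q^{(n+1)/2}$ in $a(q)^2-a(q^2)^2$ (note that $n+1$ is even), so the odd case reduces to the single theta identity $a(q)^2-a(q^2)^2=12\,q\,\psi(q)^2\psi(q^3)^2=12qT(q)$: granting it, $A_{2n+2}-A_{n+1}=12\,t_{1,1,3,3}(\tfrac{n-1}{2})$, whence $r_{1,1,3,3}(2n+2)-r_{1,1,3,3}(n+1)=16\,t_{1,1,3,3}(\tfrac{n-1}{2})=8\,t_{1,1,3,3}(n)$, which is exactly the claimed odd-index formula.

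The main obstacle is therefore this clean key identity $a(q)^2-a(q^2)^2=12qT(q)$, and I expect all the real content to be concentrated there. To prove it I would square (\ref{eqn:Cooper;(f)}), i.e.\ $a(q)=a(q^4)+6qP$, and again substitute $a(q^4)P=T(q)-2qT(q^2)$ and $P^2=T(q^2)$ to get $a(q)^2=a(q^4)^2+12qT(q)+12q^2T(q^2)$. Setting $D(q)=a(q)^2-a(q^2)^2-12qT(q)$ and combining this relation with its image under $q\mapsto q^2$, the $12q^2T(q^2)$ terms cancel and one is left with the self-similarity $D(q)=D(q^4)$. Since $D(0)=0$, this forces $D\equiv0$: if $d_j$ with $j\ge1$ were the lowest nonzero coefficient of $D$, then $D(q)=D(q^4)$ would make $d_j=0$ unless $4\mid j$, in which case $d_j=d_{j/4}$ with $1\le j/4<j$, contradicting minimality. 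The only delicate points are the coefficient bookkeeping in the extraction step and the cancellation verifying $D(q)=D(q^4)$; once these are in place the theorem follows.
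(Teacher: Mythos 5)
Your proposal is correct, but note that the paper contains no proof of this statement to compare against: Theorem \ref{thm:Baruah-Cooper-Hirschhorn} is simply quoted from Baruah--Cooper--Hirschhorn \cite{Baruah-Cooper-Hirschhorn}, so what you have written is a genuine self-contained derivation of a result the paper treats as a black box. I checked your steps and they all hold. The master identity $R(q)=a(q^4)^2+4qT(q)-4q^2T(q^2)$ does follow from squaring (\ref{eqn:Cooper;(d)}) after eliminating $a(q^4)P=T(q)-2qT(q^2)$ and $P^2=T(q^2)$; the even case is then immediate by parity, since $a(q^4)^2$ and $q^2T(q^2)$ are even series. In the odd case, your coefficient extraction is right, and the identification of $A_{2n+2}-A_{n+1}$ with the coefficient of $q^{(n+1)/2}$ in $a(q)^2-a(q^2)^2$ is valid because $n+1$ is even. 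Your use of the doubling relation $t_{1,1,3,3}(2N+1)=2t_{1,1,3,3}(N)$, equation (\ref{eqn:relation-tri-1133(3)}), is not circular: the paper establishes it in Theorem \ref{thm:property} from (\ref{eqn:Cooper;(c)}), (\ref{eqn:Cooper;(e)}), (\ref{eqn:Cooper;(f)}) alone, independently of the present theorem. Finally, your key lemma $a(q)^2-a(q^2)^2=12q\,\psi(q)^2\psi(q^3)^2$ is correctly reduced, via squaring (\ref{eqn:Cooper;(f)}) and the same eliminations, to the self-similarity $D(q)=D(q^4)$ with $D(0)=0$, which forces $D\equiv 0$ for formal power series exactly as you argue (iterating $d_j=d_{j/4}$ until the index is no longer divisible by $4$). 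What your route buys is that the whole of Theorem \ref{thm:Baruah-Cooper-Hirschhorn} becomes provable from the six identities (\ref{eqn:Cooper;(a)})--(\ref{eqn:Cooper;(f)}) already listed in Section 2, making the paper self-contained; the price is the extra lemma on $a(q)^2-a(q^2)^2$, which is the real content of your argument and would need to be stated and proved explicitly (it is a known cubic-theta identity, but it appears nowhere in the paper).
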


\subsubsection{Proof of Theorem \ref{thm:(1,1,3,3)} (1) for $d=3$ }

\begin{proof}
From Theorem \ref{thm:three-or four-squares}, first note that for every $N\in\mathbb{N}_0,$ $r_{1,1,3,3}(N)>0.$ If $n\in\mathbb{N}_0$ is even and $n=2N$ for $N\in\mathbb{N}_0,$ by Theorem \ref{thm:Baruah-Cooper-Hirschhorn}, we obtain
$$
t_{1,1,3,3}(n)=t_{1,1,3,3}(2N)=\frac14 r_{1,1,3,3}(2N+1)>0.
$$
\par
Suppose that $n\in\mathbb{N}_0$ is odd and $n+1=2^k(2N+1)$ for $k\in\mathbb{N}$ and $N\in\mathbb{N}_0.$ Then,
$$
n=2^k\cdot (2N)+(2^k-1).
$$
From Theorems \ref{thm:property} and \ref{thm:Baruah-Cooper-Hirschhorn}, it follows that
$$
t_{1,1,3,3}(n)=t_{1,1,3,3} \left(2^k\cdot (2N)+(2^k-1) \right)=2^k t_{1,1,3,3} (2N)=\frac{2^k}{4} r_{1,1,3,3}(2N+1)>0.
$$
\end{proof}

From the proof of Theorem \ref{thm:(1,1,3,3)} (1) for $d=3$, we can improve Theorem \ref{thm:Baruah-Cooper-Hirschhorn} of Baruah, Cooper, and Hirschhorn \cite{Baruah-Cooper-Hirschhorn} when $n$ is an odd number.

\begin{corollary}
\label{coro:Cooper-improvement}
{\it
\quad
\begin{enumerate}\itemsep=0pt
\item[$(1)$]
Suppose that $n\in\mathbb{N}_0$ is even and $n=2N$ for $N\in\mathbb{N}_0.$
Then,
$$
t_{1,1,3,3}(n)=\frac14 r_{1,1,3,3}(2N+1).
$$
\item[$(2)$]
Suppose that $n\in\mathbb{N}_0$ is odd and $n+1=2^k(2N+1)$ for $k\in\mathbb{N}$ and $N\in\mathbb{N}_0.$ Then,
$$
t_{1,1,3,3}(n)=\frac{2^k}{4} r_{1,1,3,3}(2N+1).
$$
\end{enumerate}
}
\end{corollary}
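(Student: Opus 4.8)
The plan is to observe that this corollary simply records, as exact formulas, the two identities that were already established en route to proving Theorem~\ref{thm:(1,1,3,3)}~(1) for $d=3$; there the same computations were carried out but only the positivity of the right-hand sides was extracted. Both parts will rest on the even branch of Theorem~\ref{thm:Baruah-Cooper-Hirschhorn} together with the halving relation of Theorem~\ref{thm:property}.

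For part~(1) I would argue directly. Since $n=2N$ is even, the even branch of Theorem~\ref{thm:Baruah-Cooper-Hirschhorn} applies verbatim and gives
$$
t_{1,1,3,3}(n)=\frac14\,r_{1,1,3,3}(n+1)=\frac14\,r_{1,1,3,3}(2N+1),
$$
which is the claimed identity with no further work.

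For part~(2) the key step is to rewrite the odd index so that Theorem~\ref{thm:property} can strip off the power of $2$. Writing $n+1=2^k(2N+1)$ gives $n=2^{k+1}N+(2^k-1)=2^k\cdot(2N)+(2^k-1)$, so applying Theorem~\ref{thm:property} with inner argument $2N$ yields $t_{1,1,3,3}(n)=2^k\,t_{1,1,3,3}(2N)$. Because $2N$ is even, I can then invoke part~(1) (equivalently, the even branch of Theorem~\ref{thm:Baruah-Cooper-Hirschhorn}) to evaluate the remaining factor as $\tfrac14 r_{1,1,3,3}(2N+1)$, giving
$$
t_{1,1,3,3}(n)=\frac{2^k}{4}\,r_{1,1,3,3}(2N+1),
$$
as required.

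I do not anticipate a genuine obstacle here: the content is entirely contained in the earlier $d=3$ argument, and the only point demanding care is the index bookkeeping—verifying that the hypothesis $n+1=2^k(2N+1)$ is exactly equivalent to $n=2^k\cdot(2N)+(2^k-1)$, so that Theorem~\ref{thm:property} applies with an \emph{even} inner argument and thereby reduces the odd case to the already-settled even case.
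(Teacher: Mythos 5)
Your proposal is correct and is exactly the paper's argument: the corollary is stated there as an immediate byproduct of the proof of Theorem~\ref{thm:(1,1,3,3)}~(1) for $d=3$, which performs precisely your two steps --- the even branch of Theorem~\ref{thm:Baruah-Cooper-Hirschhorn} for part~(1), and the rewriting $n=2^k\cdot(2N)+(2^k-1)$ followed by Theorem~\ref{thm:property} and the even case for part~(2). Nothing is missing; the index bookkeeping you flag is the only point of care, and you handle it correctly.
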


\begin{corollary}
{\it
For $k\in\mathbb{N}$ and $N\in\mathbb{N}_0,$ 
$$
r_{1,1,3,3}(2^k (2N+1))
=r_{1,1,3,3}(2(2N+1))+4(2^{k-1}-1)r_{1,1,3,3}(2N+1).
$$
}
\end{corollary}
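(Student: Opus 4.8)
The plan is to derive a first-order recurrence in $k$ for $r_{1,1,3,3}(2^k(2N+1))$ and then sum it. Throughout I would fix $N\in\mathbb{N}_0$ and abbreviate $r:=r_{1,1,3,3}(2N+1)$.

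First I would rewrite the odd-case identity of Theorem \ref{thm:Baruah-Cooper-Hirschhorn} in the form
$$
r_{1,1,3,3}(2n+2)=8\,t_{1,1,3,3}(n)+r_{1,1,3,3}(n+1)\qquad(n\ \text{odd}).
$$
For each $k\in\mathbb{N}$, I would choose the odd integer $n$ determined by $n+1=2^k(2N+1)$; then $2n+2=2^{k+1}(2N+1)$. Corollary \ref{coro:Cooper-improvement}(2), applied with this $k$ and $N$, gives $t_{1,1,3,3}(n)=\frac{2^k}{4}r$. Substituting yields
$$
r_{1,1,3,3}(2^{k+1}(2N+1))=2^{k+1}r+r_{1,1,3,3}(2^{k}(2N+1)),\qquad k\in\mathbb{N}.
$$

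Writing $a_k:=r_{1,1,3,3}(2^{k}(2N+1))$, this is the linear recurrence $a_{k+1}=a_k+2^{k+1}r$ with base value $a_1=r_{1,1,3,3}(2(2N+1))$. Telescoping gives
$$
a_k=a_1+r\sum_{j=2}^{k}2^{j}=a_1+(2^{k+1}-4)\,r,
$$
and since $2^{k+1}-4=4(2^{k-1}-1)$ this is exactly the asserted formula. I would present the final step either as this geometric summation or, equivalently, by induction on $k$; the case $k=1$ is automatically consistent because the coefficient $4(2^{k-1}-1)$ vanishes there.

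I do not expect a genuine obstacle: the only delicate points are bookkeeping. One must verify that the integer $n$ satisfying $n+1=2^k(2N+1)$ is odd — which holds precisely because $k\ge 1$ forces $n+1$ to be even — so that the odd branch of Theorem \ref{thm:Baruah-Cooper-Hirschhorn} is the one that applies, and one must keep the lower limit of the sum at $j=2$ so that the constant emerges as $2^{k+1}-4=4(2^{k-1}-1)$ rather than an off-by-one variant.
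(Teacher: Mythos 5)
Your proof is correct and takes essentially the same route as the paper: both combine the odd case of Theorem \ref{thm:Baruah-Cooper-Hirschhorn} (with $n+1=2^k(2N+1)$) and Corollary \ref{coro:Cooper-improvement} (2) to obtain the recurrence $r_{1,1,3,3}(2^{k+1}(2N+1))=r_{1,1,3,3}(2^{k}(2N+1))+2^{k+1}r_{1,1,3,3}(2N+1)$, and then solve it in $k$. Your telescoping computation simply makes explicit the step the paper compresses into ``solving this recurrence relation.''
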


\begin{proof}
For $k,n\in\mathbb{N}$ and $N\in\mathbb{N}_0,$ set
$$
n+1=2^k(2N+1),
$$
which implies $n=2^k\cdot(2N)+2^k-1.$
\par
By Theorem \ref{thm:Baruah-Cooper-Hirschhorn} and Corollary \ref{coro:Cooper-improvement} (2), it follows that
\begin{align*}
t_{1,1,3,3}(n)=&\frac18 \{  r_{1,1,3,3}(2n+2)-  r_{1,1,3,3}(n+1) \} \\
               =&\frac18 \{ r_{1,1,3,3}(2^{k+1}(2N+1))-r_{1,1,3,3}(2^k(2N+1)) \}  \\
               =&\frac{2^k}{4} r_{1,1,3,3}(2N+1),
\end{align*}
which implies 
$$
r_{1,1,3,3}(2^{k+1}(2N+1))=r_{1,1,3,3}(2^k(2N+1))+2^{k+1} r_{1,1,3,3}(2N+1).
$$
Solving this recurrence relation with respect to $k$ completes the proof.
\end{proof}

\subsection{Proof of Theorem \ref{thm:(1,1,3,3)} (1) for $d=6$}

\begin{proof}
From Theorem \ref{thm:(1,1,3)}, we only need to prove that $n\in\mathbb{N}_0$ can be written as $t_x+t_y+3t_z+6t_w$ for $x,y,z,w\in\mathbb{N}_0$ if $n\equiv 5 \,\mathrm{or} \,8 (\bmod \, 9).$
\par
When $n\equiv 8 (\bmod \, 9),$ taking $w=1$ yields
$$
n-6t_1\equiv 8-6\cdot 1\equiv 2 (\bmod \, 9),
$$
which implies from Theorem \ref{thm:(1,1,3)} that $n-6t_1$ can be expressed as $t_x+t_y+3t_z$ for $x,y,z\in\mathbb{N}_0.$
\par
When $n \equiv 5 (\bmod \, 9),$ by Theorem \ref{thm:(1,1,3)}, we assume that 
$$
8n+5=9^k (8N+5), \,\, k, N\in\mathbb{N}, \,\,N \equiv 8 (\bmod \, 9).
$$
Taking $w=2,$ note that  $n-6t_2=n-6\cdot 3\equiv 5 (\bmod \, 9)$ and
\begin{align*}
8(n-6t_2)+5=&8n+5-8\cdot 2 \cdot 9  \\
               =&9^k (8N+5)-8\cdot 2 \cdot 9  \\
               =&9\left\{ 9^{k-1}(8N+5)-8\cdot 2   \right\}  \\
               =&9\left\{  8\left( 9^{k-1} N+\frac{9^{k-1}\cdot 5-5}{8}-2 \right)+5 \right\}.
\end{align*}
When $k=1,$ 
$$
9^{k-1} N+\frac{9^{k-1}\cdot 5-5}{8}-2= N-2\equiv 8-2\equiv 6 (\bmod \, 9),
$$
which implies $n-6t_2$ can be expressed as $t_x+t_y+3t_z$ for $x,y,z\in\mathbb{N}_0.$
\par
When $k\ge 2,$ we obtain
$$
9^{k-1} N+\frac{9^{k-1}\cdot 5-5}{8}-2 \equiv -8\cdot 5-2\equiv 3 (\bmod \, 9),
$$
which implies $n-6t_2$ can be expressed as $t_x+t_y+3t_z$ for $x,y,z\in\mathbb{N}_0.$
\end{proof}

\subsection{Proof of Theorem \ref{thm:(1,1,3,3)} (1) for $d=7,8$}

\begin{proof}
By Theorem \ref{thm:(1,1,3)}, we are reduced to proving that $n\in\mathbb{N}_0$ can be written as $t_x+t_y+3t_z+dt_w$ for $x,y,z,w\in\mathbb{N}_0$ if $n\equiv 5 \,\mathrm{or} \,8 (\bmod \, 9).$
\par
Taking $w=1,$ we have
$$
n-d \cdot t_1\not\equiv 5,8 (\bmod \, 9),
$$
which implies $n-d \cdot t_1$ can be expressed as $t_x+t_y+3t_z$ for $x,y,z\in\mathbb{N}_0.$
\end{proof}

\subsection{Proof of Theorem \ref{thm:(1,1,3,3)} (2)}

\begin{proof}
Suppose that every $n\in\mathbb{N}_0$ can be expressed as $at_x+bt_y+ct_z+dt_w$ for $x,y,z,w\in\mathbb{N}_0.$ Taking $n=1,2$ yields
$$
(a,b)=(1,1),  (1,2).
$$
\par
First, we consider the case when $(a,b)=(1,2).$ Choosing $n=4$ implies 
$$
(a,b,c,d)=(1,2,2,d), (1,2,3,d), (1,2,4,d).
$$
\par
Next, we consider the case when $(a,b)=(1,1).$ If $n=5$, then $c=1,2,3,4,5$, which implies 
$$
(a,b,c,d)=(1,1,1,d), (1,1,2,d), (1,1,3,d), (1,1,4,d), (1,1,5,d).
$$
When $(a,b,c,d)=(1,1,3,d),$ taking $n=8$ yields
$$
(a,b,c,d)=(1,1,3,3), (1,1,3,4), (1,1,3,5), (1,1,3,6), (1,1,3,7), (1,1,3,8).
$$
Thus, the necessary conditions are obtained; sufficiency follows from Theorems \ref{thm:Liouville} and \ref{thm:(1,1,3,3)} (1).
\end{proof}

\section{Proof of Theorem \ref{thm:a-c}}

\subsection{Preliminary results}
First, note that for each positive integer $n\in\mathbb{N},$
\begin{equation}
\label{eqn:d=-3}
\sharp
\{
(x,y)\in\mathbb{Z}^2 |
n=x^2+xy+y^2
\}
=
6
(d_{1,3}(n)-d_{2,3}(n)).
\end{equation}
To prove this formula, we refer to Berndt \cite[p. 79]{Berndt}. Formula (\ref{eqn:d=-3}) implies $n=2,5,6,8$ cannot be expressed as $x^2+xy+y^2$ for $x,y,z\in\mathbb{Z}.$
\par
Next, note that every $n\in\mathbb{N}_0$ can be expressed as $x^2+3y^2+t_z$ for $(x,y,z)\in\mathbb{Z}^2\times\mathbb{N}_0,$ which was proven by Guo, Pan, and Sun \cite{Sun}.
\par
Finally, consider the following formula from Baruah, Cooper, and Hirschhorn \cite{Baruah-Cooper-Hirschhorn}:
\begin{equation}
\label{eqn:Cooper1}
a(q)=
\varphi(q)\varphi(q^3)+
4q\psi(q^2)\psi(q^6),
\end{equation}
where
$$
\varphi(q)=\sum_{n\in\mathbb{Z}} q^{n^2}, \,\,
\psi(q)=\sum_{n=0}^{\infty} q^{n(n+1)/2}, \,\,
a(q)=\sum_{m,n\in\mathbb{Z}} q^{m^2+mn+n^2}.
$$

\subsection{Proof of Theorem \ref{thm:a-c}}
\begin{proof}

If $n=1$, then $a=1$ or $c=1.$ Taking $n=2,$ we obtain
$$
(a,c)=(1,1), (1,2), (2,1).
$$
Choosing $n=4$ implies $(a,c)\neq (1,2)$; choosing $n=8$ implies $(a,c)\neq (2,1).$
\par
From (\ref{eqn:Cooper1}), we have
$$
\psi(q) a(q)=
\varphi(q)\varphi(q^3)\psi(q)+
4q \psi(q)\psi(q^2)\psi(q^6),
$$
which implies
$$
\sum_{n=0}^{\infty} B^1_1(n) q^n=
\sum_{n=0}^{\infty} m_{1,3\textrm{-}1} (n) q^n
+
4
\sum_{N=0}^{\infty}
t_{1,2,6}(N) q^{N+1}.
$$
\par
From a result of Guo, Pan, and Sun \cite{Sun}, it follows that $m_{1,3\textrm{-}1} (n)>0.$ Therefore, $B^1_1(n)>0,$ which means that every $n\in\mathbb{N}_0$ can be expressed as $t_x+(y^2+yz+z^2)$ for $(x,y,z)\in\mathbb{N}_0\times\mathbb{Z}^2$; thus, Theorem \ref{thm:a-c} (1) holds.
\par
Theorem \ref{thm:a-c} (2) follows from the above discussion.
\end{proof}

\section{Applications of Theorem \ref{thm:(1,1,3,3)}}

Theorem \ref{thm:(1,1,3,3)} is used to prove the following theorems; in particular, to prove Theorem \ref{thm:2-triangular}, we use the fact that every $n\in\mathbb{N}_0$ can be written as $t_x+t_y+3(t_z+t_w)$ for $x,y,z,w\in\mathbb{N}_0.$

\begin{theorem}
\label{thm:2-triangular}
{\it Let $b\in\mathbb{N}$ and $2\le b \le 8.$ Then every $n\in\mathbb{N}_0$ can be expressed as $2t_x+bt_y +(z^2+zw+w^2)$ for $(x,y,z,w)\in\mathbb{N}_0^2\times\mathbb{Z}^2.$}
\end{theorem}

\begin{theorem}
\label{thm:2-triangular(2)}
{\it Let $b\in\mathbb{N}$ and $1\le b \le 4.$ Then every $n\in\mathbb{N}_0$ can be expressed as $t_x+bt_y +2(z^2+zw+w^2)$ for $(x,y,z,w)\in\mathbb{N}_0^2\times\mathbb{Z}^2.$}
\end{theorem}

\begin{theorem}
\label{thm:2-triangular(3)}
{\it Every $n\in\mathbb{N}_0$ can be expressed as $t_x+2t_y +3(z^2+zw+w^2)$ for $(x,y,z,w)\in\mathbb{N}_0^2\times\mathbb{Z}^2.$}
\end{theorem}

\begin{theorem}
\label{thm:2-triangular(4)}
{\it Let $b=1,2.$ Then every $n\in\mathbb{N}_0$ can be expressed as $t_x+bt_y +4(z^2+zw+w^2)$ for $(x,y,z,w)\in\mathbb{N}_0^2\times\mathbb{Z}^2.$}
\end{theorem}

\begin{theorem}
\label{thm:2-triangular(5)}
{\it Every $n\in\mathbb{N}_0$ can be expressed as $t_x+t_y +5(z^2+zw+w^2)$ for $(x,y,z,w)\in\mathbb{N}_0^2\times\mathbb{Z}^2.$}
\end{theorem}

\subsection{Proof of Theorem \ref{thm:2-triangular} }

\subsubsection{Preliminary results}
Consider the following result by Dickson \cite[p. 112-113]{Dickson-2}:

\begin{lemma}
\label{lem-1,4,12}
{\it
A nonnegative integer $n\in\mathbb{N}_0$ can be written as $x^2+4y^2+12z^2$ for $x,y,z\in\mathbb{Z}$ if and only if $n\neq 4l+2, \, 4l+3, \, 9^k(9l+6)$, where $k,l\in\mathbb{N}_0.$
}
\end{lemma}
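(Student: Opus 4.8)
The plan is to prove both directions by reducing everything to the ternary form $x^2+y^2+3z^2$, whose representability is completely described by Theorem~\ref{thm:three-or four-squares}~(1).

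For necessity I would argue locally at the primes $2$ and $3$. Reducing modulo $4$ gives $x^2+4y^2+12z^2\equiv x^2\pmod 4$, so any represented $n$ satisfies $n\equiv 0,1\pmod 4$, which rules out $n=4l+2$ and $n=4l+3$. For the remaining family I would use the identity $x^2+4y^2+12z^2=x^2+(2y)^2+3(2z)^2$, which exhibits every integer represented by $x^2+4y^2+12z^2$ as one represented by $u^2+v^2+3w^2$. By Theorem~\ref{thm:three-or four-squares}~(1) such an integer cannot equal $9^k(9l+6)$, which gives the last excluded family.

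For sufficiency, fix $n$ with $n\equiv 0,1\pmod 4$ and $n\neq 9^k(9l+6)$, and use Theorem~\ref{thm:three-or four-squares}~(1) to write $n=a^2+b^2+3c^2$. The goal is to massage this into a representation in which the coordinates attached to the coefficients $4$ and $12$ are even. A short parity count modulo $4$ shows that every such representation falls into a handful of types: if the even/odd pattern is already the right one (for instance $a$ odd and $b,c$ even, or all three even), then reading off $x=a$, $y=b/2$, $z=c/2$, or the analogous substitution in the all-even case, finishes immediately. The only obstructive configurations are those in which $a^2+3c^2=4m$ (or $b^2+3c^2=4m$) with $a,c$ (resp.\ $b,c$) both odd, so that $m$ is odd.

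Everything therefore reduces to the following $2$-descent for the binary form $x^2+3y^2$, which I expect to be the crux: if $b^2+3c^2=4m$ with $b,c$ odd, then $m$ is itself represented by $x^2+3y^2$. Granting this, $4m=(2s)^2+3(2t)^2$, whence $n=a^2+4s^2+12t^2$ (or $n=b^2+4s^2+12t^2$), exactly the desired shape. To establish the descent I would invoke the explicit automorphs of $x^2+3y^2$ coming from the units of $\mathbb{Z}[\sqrt{-3}]$, namely $(b,c)\mapsto\big(\tfrac{b+3c}{2},\tfrac{b-c}{2}\big)$ and $(b,c)\mapsto\big(\tfrac{b-3c}{2},\tfrac{b+c}{2}\big)$. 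Since $b,c$ are odd we have $b\equiv c$ or $b\equiv -c\pmod 4$; in the first case the former map, and in the second the latter, carries $(b,c)$ to a pair of even integers, and dividing by $2$ produces a representation of $m$. The delicate point is the careful bookkeeping of these parity cases---verifying that one of the two automorphs always lands on an even integer pair---and once that is in hand the theorem follows. A more structural alternative would be to compute the genus of $x^2+4y^2+12z^2$ (of determinant $48$) and check that it consists of a single class, after which the stated exclusions are precisely the local representability conditions at $2$ and $3$; there the obstacle simply shifts to proving the one-class-per-genus fact.
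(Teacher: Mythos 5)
Your proof is correct, but it takes a genuinely different route from the paper, which offers no argument at all for Lemma~\ref{lem-1,4,12}: it simply quotes the result from Dickson's tables of regular ternary forms \cite[pp.~112--113]{Dickson-2}, the same source as Theorem~\ref{thm:three-or four-squares}. You instead derive the lemma from Theorem~\ref{thm:three-or four-squares}~(1) alone: necessity via $x^2+4y^2+12z^2\equiv x^2\pmod 4$ together with the rewriting $x^2+4y^2+12z^2=x^2+(2y)^2+3(2z)^2$, and sufficiency by starting from a representation $n=a^2+b^2+3c^2$ and correcting parities with a $2$-descent on the binary form $x^2+3y^2$. The descent does go through, and the ``delicate point'' you flag is in fact already settled by the congruence you state: for $b,c$ odd both maps have integer entries, and since $(b+3c)/2=(b-c)/2+2c$ and $(b-3c)/2=(b+c)/2-2c$, the first automorph yields an even pair when $b\equiv c\pmod 4$ and the second when $b\equiv -c\pmod 4$, one of which always holds. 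Your parity classification is also exhaustive: for $n\equiv 0,1\pmod 4$ the possible patterns of $(a,b,c)$ mod $2$ are exactly (one of $a,b$ odd, $c$ even), (all even), (exactly one of $a,b$ odd, $c$ odd), and (all odd); the first two give the substitution directly, and the last two are handled by applying the descent to the odd pair containing $c$ and keeping the leftover coordinate as $x$ (its parity is irrelevant). One cosmetic remark: the automorphs you write down are multiplication by the norm-one units $(1\pm\sqrt{-3})/2$, which live in the maximal order $\mathbb{Z}[(1+\sqrt{-3})/2]$ rather than in $\mathbb{Z}[\sqrt{-3}]$ itself; since you verify the identity directly, this does not matter. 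What your route buys is self-containedness: the lemma reduces to the one Dickson result the paper already relies on, at the cost of a short case analysis; the paper's bare citation buys brevity. Your genus-theoretic alternative would also work (the form of determinant $48$ does lie in a one-class genus, which is precisely why it appears in Dickson's list), but it is heavier machinery than the descent you have already completed, so there is no need to fall back on it.
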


Lemma \ref{lem-1,4,12} gives rise to the following proposition:

\begin{proposition}
\label{prop:A-a=1,c=4}
{\it
A nonnegative integer $n\in\mathbb{N}_0$ can be written as $x^2+4(y^2+yz+z^2)$ for $x,y,z\in\mathbb{Z}$ if and only if $n\neq 4l+2, \, 4l+3, \, 9^k(9l+6)$, where $k,l\in\mathbb{N}_0.$
}
\end{proposition}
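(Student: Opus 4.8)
The plan is to show that the form $x^2+4(y^2+yz+z^2)$ and the form $x^2+4y^2+12z^2$ of Lemma \ref{lem-1,4,12} represent exactly the same set of nonnegative integers; the proposition then follows at once from Lemma \ref{lem-1,4,12}. Since both forms have the shape $x^2+4(\,\cdot\,)$, this reduces to the purely binary assertion that the quadratic forms $y^2+yz+z^2$ and $y^2+3z^2$ represent the same subset of $\mathbb{N}_0$. I would isolate this binary claim and prove the two inclusions separately.

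First I would check that every value of $y^2+3z^2$ is a value of $y^2+yz+z^2$. This is a direct substitution: the identity $(y-z)^2+(y-z)(2z)+(2z)^2=y^2+3z^2$ shows that the pair $(y-z,2z)$ represents $y^2+3z^2$ through the form $y^2+yz+z^2$. No case analysis is needed here.

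The converse inclusion is the main obstacle: given $m=a^2+ab+b^2$ I must exhibit $m$ in the form $y^2+3z^2$. The idea is to use an automorphism of $a^2+ab+b^2$ to force the second coordinate to be even. Concretely, the map $T\colon(a,b)\mapsto(a+b,-a)$ preserves the value $a^2+ab+b^2$, and iterating $T$ produces representations whose second coordinate runs, up to sign, through $b$, $a$, and $a+b$; among these three integers at least one is even by a parity argument. Hence I may assume $b=2b'$, and then completing the square gives $m=a^2+2ab'+4b'^2=(a+b')^2+3b'^2$, which is the desired form. The delicate point to get right is simply that the orbit of $T$ indeed realizes all three of $a$, $b$, $a+b$ as second coordinates, so that one can always land on an even one.

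Combining the two inclusions yields $\{y^2+yz+z^2\}=\{y^2+3z^2\}$ as subsets of $\mathbb{N}_0$, whence $\{x^2+4(y^2+yz+z^2)\}=\{x^2+4y^2+12z^2\}$. The stated exclusion of $4l+2$, $4l+3$, and $9^k(9l+6)$ is then exactly the content of Lemma \ref{lem-1,4,12}, completing the argument.
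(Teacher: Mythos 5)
Your proof is correct, but it follows a genuinely different route from the paper's. The paper stays inside Ramanujan's theta-function machinery: replacing $q$ by $q^4$ in (\ref{eqn:Cooper;(f)}) and (\ref{eqn:Cooper;(d)}) and multiplying by $\varphi(q)$, it obtains the coefficient relations $A^1_4(n)=A^1_{16}(n)+6\,m_{1\text{-}8,24}(n-4)$ and $r_{1,4,12}(n)=A^1_{16}(n)+2\,m_{1\text{-}8,24}(n-4)$; since the two right-hand sides are positive under exactly the same condition, $A^1_4(n)>0\Longleftrightarrow r_{1,4,12}(n)>0$, and Lemma \ref{lem-1,4,12} concludes. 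You instead prove the elementary fact that the binary forms $y^2+yz+z^2$ and $y^2+3z^2$ represent the same integers, and your verification is sound: the substitution $(y,z)\mapsto(y-z,2z)$ gives one inclusion, and for the converse the value-preserving map $T(a,b)=(a+b,-a)$ produces representations of $m=a^2+ab+b^2$ whose second coordinates are, up to sign, $b$, $a$, $a+b$, at least one of which is even (if $a$ and $b$ are both odd, then $a+b$ is even); writing that even coordinate as $2b'$ and completing the square, $m=a^2+2ab'+4b'^2=(a+b')^2+3b'^2$, as required. Your approach buys quite a lot: it is constructive, avoids the theta identities entirely, and, being uniform in the outer coefficient, the same equivalence of binary forms immediately yields Propositions \ref{prop-A-1-16}, \ref{prop-A-1-24} and \ref{prop-A-1-40} as well, whereas the paper must repeat the theta computation separately for $c=16$, $24$, $40$. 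What the paper's method buys in exchange is exact linear relations among representation numbers rather than mere equality of represented sets, and such counting identities are the currency used elsewhere in the paper (e.g.\ in Theorem \ref{thm:property} and Corollary \ref{coro:Cooper-improvement}).
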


\begin{proof}
Replacing $q$ by $q^4$ in (\ref{eqn:Cooper;(f)}) and (\ref{eqn:Cooper;(e)}) yields
\begin{align*}
a(q^4)=&a(q^{16})+6q^4\psi (q^8)\psi (q^{24}),   \\
\varphi(q^4)\varphi(q^{12})=&a(q^{16})+2q^4\psi (q^8)\psi (q^{24}).
\end{align*}
Multiplying both sides of these equations by $\varphi(q)$ results in 
\begin{align}
\varphi(q)a(q^4)=&\varphi(q)a(q^{16})+6q^4\varphi(q)\psi (q^8)\psi (q^{24}),  \\
\varphi(q)\varphi(q^4)\varphi(q^{12})=&\varphi(q)a(q^{16})+2q^4\varphi(q)\psi (q^8)\psi (q^{24}),
\end{align}
which implies
\begin{align*}
\sum_{n=0}^{\infty} A^1_4(n) q^n=&\sum_{n=0}^{\infty} A^1_{16}(n) q^n +6q^4 \sum_{N=0}^{\infty} m_{1\text{-}8,24}(N)q^N,  \\
\sum_{n=0}^{\infty} r_{1,4,12}(n) q^n=&\sum_{n=0}^{\infty} A^1_{16}(n) q^n +2q^4 \sum_{N=0}^{\infty} m_{1\text{-}8,24}(N)q^N.
\end{align*}
Therefore, it follows that $n\neq 4l+2, \, 4l+3, \, 9^k(9l+6)$ , where $k,l\in\mathbb{N}_0$, if and only if $ r_{1,4,12}(n)>0,$ if and only if $A^1_{16}(n)>0$ or $m_{1\text{-}8,24}(n-4)>0,$ and if and only if $A^1_4(n)>0,$ which proves the proposition.
\end{proof}

Using Proposition \ref{prop:A-a=1,c=4}, we can prove the following proposition:

\begin{proposition}
\label{prop:B-a=2,c=1}
{\it A nonnegative integer $N\in\mathbb{N}_0$ can be written as $2t_x+(y^2+yz+z^2)$ for $(x,y,z)\in\mathbb{N}_0\times\mathbb{Z}^2$ if and only if either of the following occurs:
\begin{enumerate}\itemsep=0pt
\item[$(1)$] $N\not\equiv 2, 8 \,(\bmod \,9),$
\item[$(2)$] $N\equiv 2 \,(\bmod \,9)$ and $4N+1=9^k(4 N^{\prime}+1), \, N^{\prime} \not\equiv 2,8 (\bmod \, 9), \,k\in\mathbb{N}, N^{\prime}\in\mathbb{N}_0.$
\end{enumerate}
}
\end{proposition}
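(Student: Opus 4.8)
The plan is to reduce this statement to Proposition~\ref{prop:A-a=1,c=4} by a completing-the-square substitution, and then to translate the resulting divisibility condition into the congruence conditions $(1)$--$(2)$ exactly as was done in Theorem~\ref{thm:(1,1,3)}.

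First I would note that $4N+1$ is odd, so that in any representation $4N+1=x^2+4(y^2+yz+z^2)$ with $x,y,z\in\mathbb{Z}$ the integer $x$ is automatically odd. Writing $x=\pm(2m+1)$ with $m\in\mathbb{N}_0$, one has $(x^2-1)/4=m^2+m=2t_m$, whence
$$
4N+1=x^2+4(y^2+yz+z^2)\iff N=2t_m+(y^2+yz+z^2).
$$
This gives the equivalence $B^2_1(N)>0\iff A^1_4(4N+1)>0$: a representation of $N$ by $2t_m+(y^2+yz+z^2)$ produces a representation of $4N+1$ by $x^2+4(y^2+yz+z^2)$ on setting $x=2m+1$, and conversely any representation of the odd number $4N+1$ forces $x$ odd and hence yields $m=(|x|-1)/2\in\mathbb{N}_0$. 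By Proposition~\ref{prop:A-a=1,c=4}, $A^1_4(4N+1)>0$ holds iff $4N+1\neq 4l+2,\,4l+3,\,9^k(9l+6)$ for all $k,l\in\mathbb{N}_0$; since $4N+1\equiv 1\pmod 4$, the shapes $4l+2$ and $4l+3$ never occur, so the criterion collapses to $4N+1\neq 9^k(9l+6)$.

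It then remains to rewrite $4N+1\neq 9^k(9l+6)$ in terms of $N$. Here I would use that $9\mid 4N+1$ iff $N\equiv 2\pmod 9$, and that $4N+1\equiv 6\pmod 9$ iff $N\equiv 8\pmod 9$. If $N\not\equiv 2\pmod 9$ then $9\nmid 4N+1$, so $4N+1=9^k(9l+6)$ can hold only with $k=0$, i.e. only when $N\equiv 8\pmod 9$; this yields condition $(1)$. If $N\equiv 2\pmod 9$, I would strip off all powers of $9$, writing $4N+1=9^k(4N'+1)$ with $9\nmid 4N'+1$ (the residual factor is $\equiv 1\pmod 4$ automatically, since $9^k\equiv 1\pmod 4$), and observe that $4N+1$ has the bad shape precisely when $4N'+1\equiv 6\pmod 9$, i.e. $N'\equiv 8\pmod 9$; representability is thus equivalent to $N'\not\equiv 2,8\pmod 9$, which is condition $(2)$. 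I expect this $9$-adic bookkeeping to be the fiddliest part, but it runs in complete parallel to the argument in Theorem~\ref{thm:(1,1,3)} (with $8n+5$ there replaced by $4N+1$) and poses no real obstacle; the substance of the proof is the completing-the-square reduction to Proposition~\ref{prop:A-a=1,c=4}.
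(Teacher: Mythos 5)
Your proof is correct and reaches the conclusion along the same skeleton as the paper's: both establish the equivalence $B^2_1(N)>0 \Longleftrightarrow A^1_4(4N+1)>0$ and then apply Proposition \ref{prop:A-a=1,c=4} with the same mod-$9$ bookkeeping. The genuine difference is in how that equivalence is obtained. The paper multiplies equation (\ref{eqn:Cooper;(a)}), $\varphi(q)=\varphi(q^4)+2q\psi(q^8)$, by $a(q^4)$ and compares coefficients of $q^{4N+1}$ on both sides; you argue directly that in $4N+1=x^2+4(y^2+yz+z^2)$ the integer $x$ must be odd, and that writing $x=\pm(2m+1)$ gives $(x^2-1)/4=2t_m$, i.e.\ an explicit bijection between the two kinds of representations. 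These are the same fact expressed in two languages: identity (\ref{eqn:Cooper;(a)}) encodes precisely the dichotomy that a square is either $(2k)^2$ (the $\varphi(q^4)$ term) or $(2m+1)^2=8t_m+1$ (the $2q\psi(q^8)$ term). What your version buys is self-containedness --- no theta identity is needed for this step --- while the paper's version keeps the proposition inside the uniform generating-function framework it reuses for Propositions \ref{prop:B-1-2}, \ref{prop:B-1-3} and \ref{prop:B-1-5}. Your $9$-adic stripping for condition (2) is also spelled out more fully than in the paper, which merely cites Proposition \ref{prop:A-a=1,c=4} together with the two congruence equivalences; in particular your observation that $9^k\equiv 1 \pmod 4$ forces the residual factor to have the form $4N'+1$ is correct and worth making explicit.
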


\begin{proof}
Multiplying both sides of (\ref{eqn:Cooper;(a)}) by $a(q^4)$ yields
$$
\varphi(q)a(q^4)=\varphi(q^4)a(q^4)+2q\psi(q^8)a(q^4),
$$
which implies 
$$
\sum_{n=0}^{\infty} A^1_4(n) q^n=\sum_{N=0}^{\infty} A^1_1(N) q^{4N}+2\sum_{N=0}^{\infty} B^2_1(N)q^{4N+1}.
$$
We then obtain
$$
B^2_1(N)>0 \,\mathrm{if \,and \, only \, if}\,  A^1_4(4N+1)>0.
$$
Therefore, the proposition follows from Proposition \ref{prop:A-a=1,c=4} and the facts that
$$
4N+1\equiv 0 \,(\bmod \, 9) \,\mathrm{if \,and \, only \, if}\, N\equiv 2 \,(\bmod \, 9),
$$
and
$$
4N+1\equiv 6 \,(\bmod \, 9) \,\mathrm{if \,and \, only \, if}\, N\equiv 8 \,(\bmod \, 9).
$$
\end{proof}

\subsubsection{Proof of Theorem \ref{thm:2-triangular} for $b\neq 3, 6$}

\begin{proof}
Because of Proposition \ref{prop:B-a=2,c=1}, we are reduced to proving that $N\in\mathbb{N}_0$ can be written as $2t_x+bt_y+(z^2+zw+w^2)$ for $(x,y,z,w)\in\mathbb{N}_0^2\times\mathbb{Z}^2$ if $N\equiv 2$ or $8 \,(\bmod \,9).$
\par
Suppose $b=2.$ When $N\equiv 2 (\bmod \, 9),$ taking $y=2$ results in $N-2\cdot 3\equiv 5 (\bmod \, 9),$ which implies $B^2_1(N-2\cdot t_2)>0.$ When $N\equiv 8 (\bmod \, 9),$ taking $y=1$ results in $N-2\cdot 1\equiv 6 (\bmod \, 9),$ which implies $B^2_1(N-2\cdot t_1)>0.$
\par
Next, suppose $b=4,5,7.$ Taking $y=1$ yields $N-b\cdot 1\not\equiv 2,8 (\bmod \, 9),$ which implies $B^2_1(N-b\cdot t_1)>0.$
\par
Suppose $b=8.$ When $N\equiv 2 (\bmod \, 9),$ taking $y=1$ results in $N-8\cdot 1\equiv 3 (\bmod \, 9),$ which implies $B^2_1(N-8\cdot t_1)>0.$ When $N=8, 17,26,35,44,$ we take
$$
(x,y,z,w)=(0,1,0,0), (0,1,3,0), (1,1,4,0), (0,1,3,3), (0,1,6,0).
$$
When $N\equiv 8 (\bmod \, 9)$ and $N>44,$ taking $y=3$ yields $N-8\cdot6\equiv 5 (\bmod \, 9),$ which implies $B^2_1(N-8\cdot t_3)>0.$
\end{proof}

\subsubsection{Proof of Theorem \ref{thm:2-triangular} for $b=3$}

\begin{proof}
By Proposition \ref{prop:B-a=2,c=1}, it suffices to show that $N\in\mathbb{N}_0$ can be written as $2t_x+3t_y+(z^2+zw+w^2)$ for $(x,y,z,w)\in\mathbb{N}_0^2\times\mathbb{Z}^2$ if $N\equiv 2 \,\mathrm{or} \,8 \,(\bmod \,9).$
\par
First, consider the case when $N\equiv 2 (\bmod \, 9).$ By Proposition \ref{prop:B-a=2,c=1}, we assume that $4N+1=9^k(4N^{\prime}+1),$ where $k\in\mathbb{N},\, N^{\prime}\in\mathbb{N}_0$, and $N^{\prime}\equiv 8 (\bmod \, 9).$ Taking $y=2$ results in
$$
N-3\cdot t_2=N-3\cdot 3\equiv 2 (\bmod \, 9).
$$
We then obtain
\begin{align*}
4(N-3\cdot t_2)+1=&4(N-3\cdot 3)+1  \\
                        =&4N+1-36  \\
                        =&9^k(4N^{\prime}+1)-36  \\
                        =&9\left\{ 4\left( 9^{k-1}N^{\prime}+\frac{9^{k-1}-1}{4}-1   \right) +1 \right\}.
\end{align*}
\par
When $k=1,$ 
$$
9^{k-1}N^{\prime}+\frac{9^{k-1}-1}{4}-1\equiv 8-1\equiv 7 (\bmod \, 9),
$$
which implies $B^2_1(N-3\cdot t_2)>0. $
\par
When $k\ge 2,$ we obtain
$$
9^{k-1}N^{\prime}+\frac{9^{k-1}-1}{4}-1\equiv 2-1\equiv 1 (\bmod \, 9),
$$
which implies $B^2_1(N-3\cdot t_2)>0. $
\par
Next, we consider the case when $N\equiv 8 (\bmod \, 9).$ Taking $y=1$ yields
$$
N-3\cdot t_1=N-3\equiv 5 (\bmod \, 9),
$$
which implies $B^2_1(N-3\cdot t_1)>0. $
\end{proof}

\subsubsection{Proof of Theorem \ref{thm:2-triangular} for $b=6$}

\begin{proof}
Multiplying both sides of (\ref{eqn:Cooper;(e)}) by $\psi(q^2)\psi(q^6)$ results in
$$
\psi(q^2)\psi(q^6)a(q)=\varphi(q)\varphi(q^3)\psi(q^2)\psi(q^6)+4q\psi(q^2)^2\psi(q^6)^2.
$$
Using (\ref{eqn:Cooper;(c)}) we obtain
$$
\psi(q^2)\psi(q^6)a(q)=\psi(q)^2 \psi(q^3)^2+4q\psi(q^2)^2 \psi(q^6)^2,
$$
which implies 
\begin{equation}
\label{eqn:relation-tri-1133}
\sum_{n=0}^{\infty} B^{2,6}_1(n) q^n
=
\sum_{n=0}^{\infty} t_{1,1,3,3} (n) q^n+4\sum_{N=0}^{\infty} t_{1,1,3,3} (N) q^{2N+1}.
\end{equation}
Theorem \ref{thm:(1,1,3,3)} implies $t_{1,1,3,3}(n)>0,$ which proves Theorem \ref{thm:2-triangular} for $b=6.$
\end{proof}

\subsection{Proof of Theorem \ref{thm:2-triangular(2)} }

\subsubsection{Preliminary results}

Consider the following result by Dickson \cite[p. 112]{Dickson-2}:

\begin{lemma}
\label{lem-1,16,48}
{\it
A nonnegative integer $n\in\mathbb{N}_0$ can be written as $x^2+16y^2+48z^2$ for $x,y,z\in\mathbb{Z}$ if and only if $n\neq 4l+2, 4l+3, 8l+5, 16l+8, 16l+12, 9^k(9l+6)$, where $k,l\in\mathbb{N}_0.$
}
\end{lemma}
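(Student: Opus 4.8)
The plan is to reduce the problem to the already-cited Lemma \ref{lem-1,4,12} and Theorem \ref{thm:three-or four-squares} by a $2$-adic descent, exploiting the factorization $x^2+16y^2+48z^2=x^2+16(y^2+3z^2)$. Writing $n=x^2+16M$ with $M=y^2+3z^2\ge 0$, we have $16M\equiv 0\bmod 16$, so any represented $n$ satisfies $n\equiv x^2\bmod 16$. Since the squares modulo $16$ are $0,1,4,9$, the necessary congruence condition is $n\equiv 0,1,4,9\bmod 16$, and one checks immediately that the complement of $\{0,1,4,9\}$ in $\{0,\dots,15\}$ is exactly $\{4l+2\}\cup\{4l+3\}\cup\{8l+5\}\cup\{16l+8\}\cup\{16l+12\}$. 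Thus all the stated $2$-adic exclusions are forced, and what remains is sufficiency together with the $3$-adic exclusion $n\neq 9^k(9l+6)$, which I would organize by the residue of $n$ modulo $4$.

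The residues $n\equiv 2,3\bmod 4$ are impossible since $n\equiv x^2\bmod 4\in\{0,1\}$. If $4\mid n$, then in any representation $x$ must be even, say $x=2x'$, and dividing by $4$ gives $n/4=x'^2+4(y^2+3z^2)$; hence $f$ represents $n$ iff $x^2+4y^2+12z^2$ represents $n/4$, which by Lemma \ref{lem-1,4,12} holds iff $n/4\neq 4l+2,\,4l+3,\,9^k(9l+6)$. Translating back, $n/4\equiv 2,3\bmod 4$ corresponds to the excluded classes $n\equiv 8,12\bmod 16$, while $n/4\equiv 0,1\bmod 4$ is automatically admissible; and since $4$ is a $3$-adic unit, $n/4\in\{9^k(9l+6)\}$ iff $n\in\{9^k(9l+6)\}$. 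This recovers precisely the claimed conditions for every $n$ divisible by $4$, and all of these steps are elementary.

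The genuine obstacle is the odd case, where the surviving residues force $n\equiv 1\bmod 8$. Here necessity is again easy: $f(x,y,z)=x^2+(4y)^2+3(4z)^2$ is visibly represented by $x^2+Y^2+3Z^2$, so by Theorem \ref{thm:three-or four-squares} a represented $n$ cannot equal $9^k(9l+6)$. The hard part is the converse—that every odd $n\equiv 1\bmod 8$ with $n\neq 9^k(9l+6)$ is actually represented—for which the congruence bookkeeping above is powerless. One line of attack mirrors Proposition \ref{prop:A-a=1,c=4}: applying the Ramanujan identities (\ref{eqn:Cooper;(f)}) and (\ref{eqn:Cooper;(d)}) with $q\mapsto q^{16}$ and multiplying by $\varphi(q)$ produces two expansions of $\varphi(q)a(q^{16})$ and $\varphi(q)\varphi(q^{16})\varphi(q^{48})$ that share the term $\varphi(q)a(q^{64})$ and differ only by a positive multiple of $q^{16}\varphi(q)\psi(q^{32})\psi(q^{96})$; comparing coefficients shows $r_{1,16,48}(n)>0$ iff the form $x^2+16(y^2+yz+z^2)$ represents $n$, after which a further descent to $x^2+y^2+3z^2$ would complete the argument. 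Producing that descent with manifestly nonnegative coefficients is the delicate point. The alternative, and in fact Dickson's route, is to invoke the classical theory of ternary forms directly: the form $x^2+16y^2+48z^2$ is alone in its genus, so an integer is represented globally iff it is represented $p$-adically for every $p$, and the local conditions at $p=2$ and $p=3$ yield exactly the listed exclusions while all other primes impose none. I expect verifying the single-class-genus property and carrying out these two local computations—rather than any of the elementary reductions above—to be the main effort.
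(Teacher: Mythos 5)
Your necessity arguments and your reduction of the case $4\mid n$ are correct: the listed $2$-adic exclusions are exactly the complement of the squares modulo $16$; for $4\mid n$ the variable $x$ is forced to be even, so representability of $n$ by $x^2+16y^2+48z^2$ is equivalent to representability of $n/4$ by $x^2+4y^2+12z^2$, and Lemma \ref{lem-1,4,12} translates back into the stated conditions (one small precision: here you need $4\equiv 1\bmod 3$, not merely that $4$ is a $3$-adic unit --- multiplication by $2$, also a unit, does not preserve the set $\{9^k(9l+6)\}$); and $n\neq 9^k(9l+6)$ is necessary by Theorem \ref{thm:three-or four-squares}. But the statement you yourself isolate as the heart of the lemma --- that every odd $n\equiv 1\bmod 8$ with $n\neq 9^k(9l+6)$ is actually represented --- is proved by neither of your two sketches, so the proposal has a genuine gap. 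The theta-function route only yields the equivalence $r_{1,16,48}(n)>0\Longleftrightarrow A^1_{16}(n)>0$; in the paper this equivalence is precisely how Proposition \ref{prop-A-1-16} is deduced \emph{from} the present lemma, so invoking it here merely trades the unknown for an equivalent unknown, and the ``further descent'' you defer is the entire missing content. The genus route rests on the unverified assertion that $\langle 1,16,48\rangle$ is alone in its genus (or at least regular); that assertion \emph{is} Dickson's theorem, and in fact the paper does not prove this lemma at all but quotes it from \cite[pp.112]{Dickson-2}, where the form appears in the table of regular diagonal ternary forms.

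For what it is worth, your descent can be completed elementarily, so the gap is fillable. Let $n\equiv 1\bmod 8$ with $n\neq 9^k(9l+6)$. Lemma \ref{lem-1,4,12} gives $n=x^2+4(y^2+3z^2)$ with $x$ odd, and reducing modulo $8$ forces $y\equiv z \bmod 2$. If $y,z$ are both even, then $n=x^2+16(y/2)^2+48(z/2)^2$ and we are done. If both are odd, use the two norm-form identities $y^2+3z^2=\left(\frac{y+3z}{2}\right)^2+3\left(\frac{y-z}{2}\right)^2=\left(\frac{y-3z}{2}\right)^2+3\left(\frac{y+z}{2}\right)^2$, which come from multiplication by the norm-one units $(1\pm\sqrt{-3})/2$. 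In each identity the two new entries are congruent modulo $2$ (their difference is $\pm 2z$), and since $\frac{y-z}{2}+\frac{y+z}{2}=y$ is odd, exactly one of the two identities has its second entry even; that identity then has both entries even, and we conclude as before. Combined with your treatment of $4\mid n$ and of the residues $2,3$ modulo $4$, this gives a complete elementary proof of the lemma from Lemma \ref{lem-1,4,12} alone --- which is more than the paper itself does, since it simply cites Dickson.
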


\begin{proposition}
\label{prop-A-1-16}
{\it A nonnegative integer $n\in\mathbb{N}_0$ can be written as $x^2+16(y^2+yz+z^2)$ for $x,y,z\in\mathbb{Z}$ if and only if $n\neq 4l+2, 4l+3, 8l+5, 16l+8, 16l+12, 9^k(9l+6)$, where $k,l\in\mathbb{N}_0.$
}
\end{proposition}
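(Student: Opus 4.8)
The plan is to mirror the mechanism behind Proposition \ref{prop:A-a=1,c=4}, but with the substitution $q\mapsto q^{16}$ in place of $q\mapsto q^4$, so that Dickson's Lemma \ref{lem-1,16,48} governing $r_{1,16,48}$ takes over the role that Lemma \ref{lem-1,4,12} played there. The engine is the observation that $\varphi(q)a(q^{16})$ is the generating function of $A^1_{16}$, while $\varphi(q)\varphi(q^{16})\varphi(q^{48})$ is the generating function of $r_{1,16,48}$, and that these two series differ only in the size of a single \emph{common} correction term. This reduces the representability question for $x^2+16(y^2+yz+z^2)$ to the already-tabulated one for $x^2+16y^2+48z^2$.

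Concretely, I would first replace $q$ by $q^{16}$ in equations (\ref{eqn:Cooper;(f)}) and (\ref{eqn:Cooper;(d)}) to obtain
\begin{align*}
a(q^{16})=&a(q^{64})+6q^{16}\psi(q^{32})\psi(q^{96}),  \\
\varphi(q^{16})\varphi(q^{48})=&a(q^{64})+2q^{16}\psi(q^{32})\psi(q^{96}).
\end{align*}
Multiplying both identities through by $\varphi(q)$ and reading off coefficients, these become
\begin{align*}
\sum_{n=0}^{\infty} A^1_{16}(n) q^n=&\sum_{n=0}^{\infty} A^1_{64}(n) q^n+6q^{16}\sum_{N=0}^{\infty} m_{1\text{-}32,96}(N)q^N,  \\
\sum_{n=0}^{\infty} r_{1,16,48}(n) q^n=&\sum_{n=0}^{\infty} A^1_{64}(n) q^n+2q^{16}\sum_{N=0}^{\infty} m_{1\text{-}32,96}(N)q^N,
\end{align*}
using that $\varphi(q)a(q^{64})$ generates $A^1_{64}$ and $\varphi(q)\psi(q^{32})\psi(q^{96})$ generates $m_{1\text{-}32,96}$.

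The key step is then a comparison of coefficient positivity. Because the correction terms $6q^{16}\varphi(q)\psi(q^{32})\psi(q^{96})$ and $2q^{16}\varphi(q)\psi(q^{32})\psi(q^{96})$ have identical support, each of $A^1_{16}(n)$ and $r_{1,16,48}(n)$ is positive exactly when $A^1_{64}(n)>0$ or $m_{1\text{-}32,96}(n-16)>0$. Hence $A^1_{16}(n)>0 \Longleftrightarrow r_{1,16,48}(n)>0$, and Lemma \ref{lem-1,16,48} converts the latter into the stated exclusion of the classes $4l+2,\,4l+3,\,8l+5,\,16l+8,\,16l+12,\,9^k(9l+6)$, completing the argument.

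The work is essentially routine once the right identities are in hand, so there is no genuine obstacle; the only care needed is bookkeeping. Specifically, one must confirm that the single series $\varphi(q)\psi(q^{32})\psi(q^{96})$ really is common to both expansions so the two positivity conditions coincide, and one must track that the $q^{16}$ factor contributes $m_{1\text{-}32,96}(n-16)$ (not $m_{1\text{-}32,96}(n)$) to the coefficient of $q^n$. It is precisely this shift that makes the equivalence with $r_{1,16,48}$ exact, allowing Dickson's lemma to be imported without loss.
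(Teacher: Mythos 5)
Your proof is correct and takes essentially the same route as the paper: substitute $q\mapsto q^{16}$ into equations (\ref{eqn:Cooper;(f)}) and (\ref{eqn:Cooper;(d)}), multiply by $\varphi(q)$, and compare coefficient positivity of the two resulting expansions to reduce the claim to Lemma \ref{lem-1,16,48}. Your bookkeeping of the shifted term $m_{1\text{-}32,96}(n-16)$ is in fact more careful than the paper's own write-up, which carries a typo ($n-4$) at the corresponding step.
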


\begin{proof}
Replacing $q$ by $q^{16}$ in (\ref{eqn:Cooper;(f)}) and (\ref{eqn:Cooper;(d)}) results in
\begin{align*}
a(q^{16})=&a(q^{64})+6q^{16}\psi (q^{32})\psi (q^{96}),   \\
\varphi(q^{16})\varphi(q^{48})=&a(q^{64})+2q^{16}\psi (q^{32})\psi (q^{96}).
\end{align*}
Multiplying both sides of these equations by $\varphi(q)$ yields
\begin{align}
\varphi(q)a(q^{16})=&\varphi(q)a(q^{64})+6q^{16}\varphi(q)\psi (q^{32})\psi (q^{96}),   \\
\varphi(q)\varphi(q^{16})\varphi(q^{48})=&\varphi(q)a(q^{64})+2q^{16}\varphi(q)\psi (q^{32})\psi (q^{96}),
\end{align}
which implies
\begin{align*}
\sum_{n=0}^{\infty} A^1_{16}(n) q^n=&\sum_{n=0}^{\infty} A^1_{64}(n) q^n +6q^{16} \sum_{N=0}^{\infty} m_{1\text{-}32,96}(N)q^N,  \\
\sum_{n=0}^{\infty} r_{1,16,48}(n) q^n=&\sum_{n=0}^{\infty} A^1_{64}(n) q^n +2q^{16} \sum_{N=0}^{\infty} m_{1\text{-}32,96}(N)q^N.
\end{align*}
Therefore, it follows that
\begin{align*}
 r_{1,16,48}(n)>0 &\,\mathrm{if \,and \, only \, if}\, A^1_{64}(n)>0 \,\textrm{or} \, m_{1\text{-}32,96}(n-16)>0,  \\
                    &\,\mathrm{if \,and \, only \, if}\, A^1_{16}(n)>0,
\end{align*}
which proves the proposition.
\end{proof}

\begin{proposition}
\label{prop:B-1-2}
{\it A nonnegative integer $N\in\mathbb{N}_0$ can be written as $t_x+2(y^2+yz+z^2)$ for $(x,y,z)\in\mathbb{N}_0\times\mathbb{Z}^2$ if and only if
either of the following occurs:
\begin{enumerate}\itemsep=0pt
\item[$(1)$]
$N\not\equiv 1, 4 \,(\bmod \,9),$
\item[$(2)$]
$N\equiv 1 \,(\bmod \,9)$ and $8N+1=9^k(8 N^{\prime}+1), \,k\in\mathbb{N}, \, N^{\prime}\in\mathbb{N}_0$, and $N^{\prime} \not\equiv 1,4 (\bmod \, 9).$
\end{enumerate}
}
\end{proposition}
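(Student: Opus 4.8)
The plan is to express $B^1_2$ in terms of the function $A^1_{16}$ already analyzed in Proposition \ref{prop-A-1-16}, and then to translate the arithmetic obstruction for $A^1_{16}$ into a congruence condition on $N$ modulo $9$. Since the generating function for $B^1_2$ is $\psi(q)a(q^2)$, and the quadratic form is weighted twice as heavily as the triangular term, the correct rescaling pairs $\psi(q^8)$ with $a(q^{16})$; this is what matches the freshly proved Proposition \ref{prop-A-1-16}.

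First I would multiply equation (\ref{eqn:Cooper;(a)}), namely $\varphi(q)=\varphi(q^4)+2q\psi(q^8)$, by $a(q^{16})$, obtaining
$$
\varphi(q)a(q^{16})=\varphi(q^4)a(q^{16})+2q\psi(q^8)a(q^{16}).
$$
Reading off coefficients, the left-hand side is $\sum_n A^1_{16}(n)q^n$, the term $\varphi(q^4)a(q^{16})$ carries only exponents divisible by $4$, and since $8t_x+16(y^2+yz+z^2)+1=8\bigl(t_x+2(y^2+yz+z^2)\bigr)+1$, the last term equals $2\sum_N B^1_2(N)q^{8N+1}$. Comparing the coefficients of $q^{8N+1}$, to which the middle term cannot contribute because $8N+1\equiv 1\bmod 4$, yields
$$
A^1_{16}(8N+1)=2B^1_2(N),
$$
so that $B^1_2(N)>0$ if and only if $A^1_{16}(8N+1)>0$.

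Next I would invoke Proposition \ref{prop-A-1-16}. Since $8N+1\equiv 1\bmod 8$ it is odd and cannot equal any of $4l+2$, $4l+3$, $8l+5$, $16l+8$, or $16l+12$, so the only surviving obstruction is $8N+1=9^k(9l+6)$. Hence $B^1_2(N)>0$ if and only if $8N+1\neq 9^k(9l+6)$ for all $k,l\in\mathbb{N}_0$.

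The remaining and decisive step is the modular bookkeeping, using $8N+1\equiv 0\bmod 9\iff N\equiv 1\bmod 9$ and $8N+1\equiv 6\bmod 9\iff N\equiv 4\bmod 9$. If $N\not\equiv 1,4\bmod 9$, then $8N+1\not\equiv 0,6\bmod 9$, so it is neither $9l+6$ (the case $k=0$) nor divisible by $9$ (the case $k\ge 1$), giving condition (1). If $N\equiv 4\bmod 9$, then $8N+1\equiv 6\bmod 9$ is forbidden with $k=0$, so $B^1_2(N)=0$. Finally, if $N\equiv 1\bmod 9$, I would write $8N+1=9^k(8N'+1)$ factoring out the maximal power of $9$, so that $9\nmid 8N'+1$, i.e. $N'\not\equiv 1\bmod 9$; then $8N+1$ lies in the forbidden set exactly when its $9$-free part $8N'+1$ is $\equiv 6\bmod 9$, i.e. $N'\equiv 4\bmod 9$, and therefore $B^1_2(N)>0$ precisely when $N'\not\equiv 1,4\bmod 9$, which is condition (2). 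The delicate point I expect to be the main obstacle is verifying that this canonical factorization really detects the forbidden set: one must observe that $9l+6\equiv 6\bmod 9$ is automatically coprime to $9$, so it must coincide with the $9$-free factor $8N'+1$, and this matching is the part I would write out with care.
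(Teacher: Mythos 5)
Your proposal is correct and follows essentially the same route as the paper: multiply equation (\ref{eqn:Cooper;(a)}) by $a(q^{16})$ to get $B^1_2(N)>0 \Longleftrightarrow A^1_{16}(8N+1)>0$, then apply Proposition \ref{prop-A-1-16} together with the congruences $8N+1\equiv 0 \bmod 9 \Longleftrightarrow N\equiv 1 \bmod 9$ and $8N+1\equiv 6 \bmod 9 \Longleftrightarrow N\equiv 4 \bmod 9$; in fact you spell out the case analysis and the matching of $9$-free parts that the paper leaves implicit. One small wording fix: $9l+6$ is not \emph{coprime} to $9$ (it is divisible by $3$); the property you actually use, and which makes the factorization argument work, is that $9 \nmid 9l+6$.
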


\begin{proof}
Multiplying both sides of (\ref{eqn:Cooper;(a)}) by $a(q^{16})$ yields
$$
\varphi(q)a(q^{16})=\varphi(q^4)a(q^{16})+2q\psi(q^8)a(q^{16}),
$$
which implies
$$
\sum_{n=0}^{\infty} A^1_{16}(n) q^n=\sum_{N=0}^{\infty} A^1_4(N) q^{4N}+2\sum_{N=0}^{\infty} B^1_2(N)q^{8N+1}.
$$
We then obtain
$$
B^1_2(N)>0 \,\mathrm{if \,and \, only \, if}\, A^1_{16}(8N+1)>0.
$$
Therefore, the proposition follows from Proposition \ref{prop-A-1-16} and the facts that
$$
8N+1\equiv 0 \,(\bmod \, 9) \,\mathrm{if \,and \, only \, if}\, N\equiv 1 \,(\bmod \, 9),
$$
and
$$
8N+1\equiv 6 \,(\bmod \, 9) \,\mathrm{if \,and \, only \, if}\, N\equiv 4 \,(\bmod \, 9).
$$
\end{proof}

\subsubsection{Proof of Theorem \ref{thm:2-triangular(2)} for $b\neq 3$}

\begin{proof}
From Proposition \ref{prop:B-1-2}, it suffices to show that if $N\equiv 1$ or $4 \,(\bmod \, 9),$ $N\in\mathbb{N}_0$ can be expressed as $t_x+bt_y+2(z^2+zw+w^2)$ for $(x,y,z,w)\in\mathbb{N}_0^2\times \mathbb{Z}^2.$ Assume that $N\equiv 1$ or $4 \,(\bmod \, 9).$
\par
For $b=1,2,$ or $4,$ taking $y=1$ yields
$
N-b\cdot t_1\not\equiv 1,4 (\bmod \, 9),
$
which implies $B^1_2(N-b\cdot t_1)>0.$
\end{proof}

\subsubsection{Proof of Theorem \ref{thm:2-triangular(2)} for $b=3$}

\begin{proof}
Multiplying both sides of (\ref{eqn:Cooper;(f)}) by $\psi(q^2)\psi(q^6)$ results in
$$
\psi(q^2)\psi(q^6)a(q)=\psi(q^2)\psi(q^6)a(q^4)+6q\psi(q^2)^2 \psi(q^6)^2,
$$
which implies 
\begin{equation}
\label{eqn:relation-tri-1133(2)}
\sum_{n=0}^{\infty} B^{2,6}_1 (n) q^n
=
\sum_{N=0}^{\infty} B^{1,3}_2(N) q^{2N}+6\sum_{N=0}^{\infty} t_{1,1,3,3} (N) q^{2N+1}.
\end{equation}
Theorem \ref{thm:2-triangular} implies $B^{2,6}_1 (n)>0,$ which allows us to conclude that $B^{1,3}_2(N)>0.$
\end{proof}

\subsection{Proof of Theorem \ref{thm:2-triangular(3)}}

\subsubsection{Preliminary results}

Consider the following result by Dickson \cite[p. 113]{Dickson-2}:

\begin{lemma}
\label{lem-1,24,72}
{\it A nonnegative integer $n\in\mathbb{N}_0$ can be written as $x^2+24y^2+72z^2$ for $x,y,z\in\mathbb{Z}$ if and only if $n\neq 3l+2, 4l+2, 4l+3, 9l+3, 4^k(8l+5)$, where $k,l\in\mathbb{N}_0.$
}
\end{lemma}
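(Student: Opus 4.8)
The statement is Dickson's, so one legitimate route is simply to cite \cite{Dickson-2}; what follows is how I would reconstruct the argument from scratch. This is a representation theorem for a diagonal positive ternary quadratic form, and the natural framework is the local--global theory of ternary forms. The determinant of $f(x,y,z)=x^2+24y^2+72z^2$ is $1728=2^6\cdot 3^3$, so the only ``bad'' primes are $2$ and $3$; at every prime $p\ge 5$ the form is nondegenerate over $\mathbb{Z}_p$ and represents all of $\mathbb{Z}_p$, so no condition arises there. The whole content is therefore concentrated at $p=2,3$ together with a single global regularity input.

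For necessity I would show that the listed set is exactly the set of $n$ failing to be represented over some $\mathbb{Z}_p$. Reducing modulo $4$ gives $f\equiv x^2\pmod 4$, since $4\mid 24$ and $4\mid 72$, whence $n\not\equiv 2,3\pmod 4$; reducing modulo $8$ gives $f\equiv x^2\pmod 8\in\{0,1,4\}$, ruling out $n\equiv 5\pmod 8$ (the case $k=0$). For the higher $2$-powers one reads off the exclusion $n\ne 4^k(8l+5)$ from the $2$-adic Jordan splitting $f\cong\langle 1\rangle\perp 8\langle 3,9\rangle$ over $\mathbb{Z}_2$, or equivalently by the descent: if $4\mid n$ then $x$ is even, $x=2u$, and $n/4=u^2+6y^2+18z^2$, and iterating reduces to the base obstruction $\equiv 5\pmod 8$. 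At $p=3$, reduction modulo $3$ gives $f\equiv x^2$, so $n\not\equiv 2\pmod 3$; modulo $9$ one computes $f\equiv x^2+6y^2\pmod 9$ with value set $\{0,1,4,6,7\}$, which adds only the new obstruction $n\not\equiv 3\pmod 9$ (the residues $2,5,8$ already being excluded mod $3$). When $9\mid n$ a short descent forces $3\mid x$ and $3\mid y$, giving $n=9(x'^2+24y'^2+8z^2)$, and the reduced form is $3$-adically universal, so no further $3$-adic condition appears. This shows the excluded set is precisely the set of everywhere-locally-non-represented $n$.

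For sufficiency the crux is to upgrade ``represented everywhere locally'' to ``represented over $\mathbb{Z}$.'' The plan is to show that $f$ is a \emph{regular} ternary form, i.e.\ that every class in the genus of $f$ represents the same integers, equivalently that the genus of discriminant $2^6\cdot 3^3$ containing $f$ consists of a single class. I would write down the genus symbol of $f$ at $2$ and at $3$, enumerate by reduction theory the reduced positive ternary forms of that determinant lying in the same genus, and check that $f$ is alone in its genus (or that any companion represents the same set of integers). Given that, the local--global principle for the genus of ternary forms yields that each $n$ surviving the congruence tests of the previous paragraph is represented by $f$ itself, completing the proof.

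The main obstacle is exactly this regularity verification. For a general ternary form the local--global step genuinely fails—there can be integers represented by the genus but not by the form (spinor-exceptional integers)—so proving the genus is a single class (or isolating and handling any exceptional square classes) is the one nonformal ingredient, and it is where Dickson's classical computation does the real work; equivalently one may invoke the modern classification of regular ternary forms, on whose list $x^2+24y^2+72z^2$ appears. Everything else is the bookkeeping of the congruence conditions carried out above.
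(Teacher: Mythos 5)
The paper offers no proof of this lemma at all---it is quoted directly from Dickson \cite[pp.113]{Dickson-2}---so your leading suggestion, simply citing Dickson, is exactly what the paper does. Your reconstruction sketch also has the right architecture (the local analysis at $p=2,3$ checks out, e.g.\ the value set $\{0,1,4,6,7\}$ mod $9$ and the splitting $\langle 1\rangle\perp 8\langle 3,9\rangle$ over $\mathbb{Z}_2$), and you correctly flag that the regularity of $x^2+24y^2+72z^2$ is the one nonformal step, which is precisely the work the citation outsources to Dickson.
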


\begin{proposition}
\label{prop-A-1-24}
{\it A nonnegative integer $n\in\mathbb{N}_0$ can be written as $x^2+24(y^2+yz+z^2)$ for $x,y,z\in\mathbb{Z}$ if and only if $n\neq 3l+2, 4l+2, 4l+3, 9l+3, 4^k(8l+5)$, where $k,l\in\mathbb{N}_0.$
}
\end{proposition}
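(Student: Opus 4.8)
The plan is to mimic exactly the two preceding arguments (Propositions \ref{prop:A-a=1,c=4} and \ref{prop-A-1-16}), reducing representability of $n$ by the form $x^2+24(y^2+yz+z^2)$ to representability by the diagonal ternary form $x^2+24y^2+72z^2$, whose value set Lemma \ref{lem-1,24,72} describes completely. The whole point is that the mixed form $A^1_{24}$ and the diagonal form $r_{1,24,72}$ have generating functions that agree up to a single interchange of the coefficients $6$ and $2$, so their supports coincide.

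First I would specialize the two theta identities whose right-hand sides feature the Borweins' cubic theta function $a(\cdot)$, namely (\ref{eqn:Cooper;(f)}) and (\ref{eqn:Cooper;(d)}), by replacing $q$ with $q^{24}$. This yields
\begin{align*}
a(q^{24}) &= a(q^{96}) + 6q^{24}\psi(q^{48})\psi(q^{144}), \\
\varphi(q^{24})\varphi(q^{72}) &= a(q^{96}) + 2q^{24}\psi(q^{48})\psi(q^{144}).
\end{align*}
Multiplying both lines through by $\varphi(q)$ converts the left-hand sides into the generating functions $\sum A^1_{24}(n)q^n$ and $\sum r_{1,24,72}(n)q^n$, while the right-hand sides share the common term $\varphi(q)a(q^{96})=\sum A^1_{96}(n)q^n$ and differ only in the coefficient ($6$ versus $2$) of $q^{24}\varphi(q)\psi(q^{48})\psi(q^{144})=\sum m_{1\text{-}48,144}(N)q^{N+24}$. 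Comparing coefficients of $q^n$ then gives the chain
$$
A^1_{24}(n)>0 \Longleftrightarrow A^1_{96}(n)>0 \ \text{or}\ m_{1\text{-}48,144}(n-24)>0 \Longleftrightarrow r_{1,24,72}(n)>0,
$$
because each of $A^1_{24}(n)$ and $r_{1,24,72}(n)$ is positive precisely when at least one of the two nonnegative summands on its right is positive, and those two summands ($A^1_{96}(n)$ and the shifted count) are literally identical in the two expansions. Inserting the explicit exclusion set of Lemma \ref{lem-1,24,72} produces exactly the classes $3l+2,\,4l+2,\,4l+3,\,9l+3,\,4^k(8l+5)$, which is the claim.

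There is no genuine obstacle here beyond bookkeeping: the only real choices are the correct dilation $q\mapsto q^{24}$ and the pairing of (\ref{eqn:Cooper;(f)}) with (\ref{eqn:Cooper;(d)}) so that the $a(q^{96})$ terms match and cancel out of the final comparison. The one step I would verify carefully is that the shift induced by the $q^{24}$ prefactor truly contributes $m_{1\text{-}48,144}(n-24)$, with the index offset equal to $24$ rather than some smaller dilation-dependent value; once that indexing is confirmed, the stated equivalence and hence the proposition follow immediately.
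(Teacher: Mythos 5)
Your proposal is correct and follows essentially the same route as the paper's own proof: specializing $q\mapsto q^{24}$ in equations (\ref{eqn:Cooper;(f)}) and (\ref{eqn:Cooper;(d)}), multiplying by $\varphi(q)$, observing that the two expansions share the term $\sum A^1_{96}(n)q^n$ and differ only in the coefficient $6$ versus $2$ on $\sum m_{1\text{-}48,144}(N)q^{N+24}$, so that $A^1_{24}(n)>0 \Longleftrightarrow r_{1,24,72}(n)>0$, and then invoking Lemma \ref{lem-1,24,72}. Your indexing check (the shift by $24$ in $m_{1\text{-}48,144}(n-24)$) also matches the paper exactly.
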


\begin{proof}
Replacing $q$ by $q^{24}$ in (\ref{eqn:Cooper;(f)}) and (\ref{eqn:Cooper;(d)}) implies
\begin{align*}
a(q^{24})=&a(q^{96})+6q^{24}\psi (q^{48})\psi (q^{144}),   \\
\varphi(q^{24})\varphi(q^{72})=&a(q^{96})+2q^{24}\psi (q^{48})\psi (q^{144}).
\end{align*}
Multiplying both sides of these equations by $\varphi(q)$ results in
\begin{align}
\varphi(q)a(q^{24})=&\varphi(q)a(q^{96})+6q^{24}\varphi(q)\psi (q^{48})\psi (q^{144}),   \\
\varphi(q)\varphi(q^{24})\varphi(q^{72})=&\varphi(q)a(q^{96})+2q^{24}\varphi(q)\psi (q^{48})\psi (q^{144}),
\end{align}
which implies
\begin{align*}
\sum_{n=0}^{\infty} A^1_{24}(n) q^n=&\sum_{n=0}^{\infty} A^1_{96}(n) q^n +6q^{24} \sum_{N=0}^{\infty} m_{1\text{-}48,144}(N)q^N,  \\
\sum_{n=0}^{\infty} r_{1,24,72}(n) q^n=&\sum_{n=0}^{\infty} A^1_{96}(n) q^n +2q^{24} \sum_{N=0}^{\infty} m_{1\text{-}48,144}(N)q^N.
\end{align*}
Therefore, it follows that
\begin{align*}
 r_{1,24,72}(n)>0 &\,\mathrm{if \,and \, only \, if}\, A^1_{96}(n)>0 \,\textrm{or} \, m_{1\text{-}48,144}(n-24)>0,  \\
                    &\,\mathrm{if \,and \, only \, if}\, A^1_{24}(n)>0,
\end{align*}
which proves the proposition.
\end{proof}

Proposition \ref{prop-A-1-24} gives rise to the following proposition:

\begin{proposition}
\label{prop:B-1-3}
{\it A nonnegative integer $n\in\mathbb{N}_0$ can be written as $t_x+3(y^2+yz+z^2)$ for $(x,y,z)\in\mathbb{N}_0\times\mathbb{Z}^2$ if and only if
$
n\not\equiv 2,5,7,8 \,(\bmod \, 9).
$
}
\end{proposition}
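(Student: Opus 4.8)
The plan is to reuse the template of Propositions \ref{prop:B-a=2,c=1} and \ref{prop:B-1-2}: I would reduce the representability of $t_x+3(y^2+yz+z^2)$ to that of the ternary form $x^2+24(y^2+yz+z^2)$, which has already been settled in Proposition \ref{prop-A-1-24}. To this end, I would multiply both sides of equation (\ref{eqn:Cooper;(a)}) by $a(q^{24})$ to obtain
$$
\varphi(q)a(q^{24})=\varphi(q^4)a(q^{24})+2q\psi(q^8)a(q^{24}).
$$
Passing to generating series, and noting that the exponent of a typical term of $2q\psi(q^8)a(q^{24})$ is $8t_x+24(y^2+yz+z^2)+1=8\bigl(t_x+3(y^2+yz+z^2)\bigr)+1$, this identity becomes
$$
\sum_{n=0}^{\infty} A^1_{24}(n)q^n=\sum_{N=0}^{\infty} A^1_6(N)q^{4N}+2\sum_{N=0}^{\infty} B^1_3(N)q^{8N+1}.
$$

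The key step is then to compare coefficients of $q^{8N+1}$. The first series on the right supports only exponents divisible by $4$, so it contributes nothing to the odd power $q^{8N+1}$; hence $A^1_{24}(8N+1)=2B^1_3(N)$, and therefore
$$
B^1_3(N)>0 \Longleftrightarrow A^1_{24}(8N+1)>0.
$$
It remains to decide, using Proposition \ref{prop-A-1-24}, for which $N$ the integer $8N+1$ avoids the exceptional set $3l+2,\ 4l+2,\ 4l+3,\ 9l+3,\ 4^k(8l+5)$.

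This congruence bookkeeping is where the bulk of the remaining work lies, though I expect it to be routine rather than a genuine obstacle. Since $8N+1\equiv 1\bmod 8$, the number $8N+1$ is odd and congruent to $1$ modulo $4$, so it can never take the shape $4l+2$, $4l+3$, or $4^k(8l+5)$ (for the last, $8l+5\equiv 5\bmod 8$, while $4^k(8l+5)$ is even when $k\ge 1$). Only the forms $3l+2$ and $9l+3$ can occur, and $8N+1\equiv 2\bmod 3$ is equivalent to $N\equiv 2\bmod 3$, whereas $8N+1\equiv 3\bmod 9$ is equivalent to $N\equiv 7\bmod 9$. Consequently $B^1_3(N)=0$ exactly when $N\equiv 2\bmod 3$ or $N\equiv 7\bmod 9$, that is, when $N\equiv 2,5,7,8\bmod 9$, which is precisely the asserted criterion. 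The real content of the argument is thus the choice to multiply by $a(q^{24})$ and isolate the coefficients of $q^{8N+1}$; everything afterward is elementary modular arithmetic.
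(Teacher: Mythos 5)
Your proposal is correct and follows essentially the same route as the paper: the paper likewise multiplies equation (\ref{eqn:Cooper;(a)}) by $a(q^{24})$, reads off the coefficient of $q^{8N+1}$ to obtain $B^1_3(N)>0 \Longleftrightarrow A^1_{24}(8N+1)>0$, and then invokes Proposition \ref{prop-A-1-24} together with the congruences $8N+1\equiv 2 \bmod 3 \Longleftrightarrow N\equiv 2 \bmod 3$ and $8N+1\equiv 3 \bmod 9 \Longleftrightarrow N\equiv 7 \bmod 9$. Your explicit check that $8N+1$ automatically avoids the exceptional classes $4l+2$, $4l+3$, and $4^k(8l+5)$ is a detail the paper leaves implicit, and it is carried out correctly.
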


\begin{proof}
Multiplying both sides of (\ref{eqn:Cooper;(a)}) by $a(q^{24})$ results in
$$
\varphi(q)a(q^{24})=\varphi(q^4)a(q^{24})+2q\psi(q^8)a(q^{24}),
$$
which implies
$$
\sum_{n=0}^{\infty} A^1_{24}(n) q^n=\sum_{N=0}^{\infty} A^1_6(N) q^{4N}+2\sum_{N=0}^{\infty} B^1_3(N)q^{8N+1}.
$$
We then obtain
$$
B^1_3(N)>0 \,\mathrm{if \,and \, only \, if}\, A^1_{24}(8N+1)>0.
$$
Therefore, the proposition follows from Proposition \ref{prop-A-1-24} and the facts that
$$
8N+1\equiv 2 \,(\bmod \, 3) \,\mathrm{if \,and \, only \, if}\, N\equiv 2 \,(\bmod \, 3),
$$
and
$$
8N+1\equiv 3 \,(\bmod \, 9) \,\mathrm{if \,and \, only \, if}\, N\equiv 7 \,(\bmod \, 9).
$$
\end{proof}

\subsubsection{Proof of Theorem \ref{thm:2-triangular(3)} }

\begin{proof}
From Proposition \ref{prop:B-1-3}, it suffices to show that $N\in\mathbb{N}_0$ can be expressed as $t_x+2t_y+3(z^2+zw+w^2)$ for $(x,y,z,w)\in\mathbb{N}_0^2\times\mathbb{Z}^2$ if $N\equiv 2,5,7\, \mathrm{or} \,8 (\bmod \, 9).$
\par
When $N\equiv 2,5 \,\mathrm{or} \,8 \,(\bmod \, 9),$ taking $y=1$ yields
$$
N-2\cdot t_1 \equiv 0,3, \,\mathrm{or} \,6 (\bmod \, 9),
$$
which implies $B^1_3(N-2\cdot t_1)>0.$
\par
When $N\equiv 7 (\bmod \, 9),$ taking $y=2$ yields
$$
N-2\cdot t_2\equiv 1 (\bmod \, 9),
$$
which implies $B^1_3(N-2\cdot t_2)>0.$
\end{proof}

\subsection{Proof of Theorem \ref{thm:2-triangular(4)}}

\subsubsection{Proof of Theorem \ref{thm:2-triangular(4)} for $b=1$}

\begin{proof}
Multiplying both sides of (\ref{eqn:Cooper;(c)}) by $a(q^4)$ results in
\begin{align}
\psi(q)^2a(q^4)=&\varphi(q)\psi(q^2)a(q^4)    \notag \\
                  =&(\varphi(q^4)+2q\psi(q^8)) \psi(q^2)a(q^4), \,\,\,\textrm{by (\ref{eqn:Cooper;(a)}),} \notag \\
                  =&\varphi(q^4)\psi(q^2)a(q^4)+2q\psi(q^2)\psi(q^8)a(q^4),  \notag
\end{align}
which implies
\begin{equation}
\label{eqn:for-even-relation}
\sum_{n=0}^{\infty} B^{1,1}_4(n) q^n=\sum_{N=0}^{\infty} M^{2\textrm{-}1}_2(N)q^{2N}+2\sum_{N=0}^{\infty} B^{1,4}_2(N) q^{2N+1},
\end{equation}
where
$$
M^{2\textrm{-}1}_2(N)=\sharp
\left\{
(x,y,z,w)\in\mathbb{Z}\times\mathbb{N}_0\times\mathbb{Z}^2 \,| \,
N=2x^2+t_y+2(z^2+zw+w^2)
\right\}.
$$
\par
If $n$ is odd and $n=2N+1,$ by Theorem \ref{thm:2-triangular(2)}, 
\begin{align*}
B^{1,1}_4(2N+1)=2B^{1,4}_2(N)>0.
\end{align*}
\par
Consider the case when $n$ is even. By (\ref{eqn:for-even-relation}), 
$$
B^{1,1}_4(2N)=M^{2\textrm{-}1}_2(N).
$$
\par
If $N \not\equiv 1,4 (\bmod \, 9),$ using Proposition \ref{prop:B-1-2}, we obtain $M^{2\textrm{-}1}_2(N)>0.$
If $N \equiv 1 \,\mathrm{or} \, 4 (\bmod \, 9),$ taking $x=1$ yields
$$
N-2x^2\equiv 8 \, \mathrm{or} \,2 (\bmod \, 9) \Longrightarrow B^1_2(N-2\cdot 1^2)>0
                                    \Longrightarrow M^{2\textrm{-}1}_2(N)>0.
$$
\end{proof}

\subsubsection{Proof of Theorem \ref{thm:2-triangular(4)} for $b=2$}

\begin{proof}
Replacing $q$ by $q^4$ in (\ref{eqn:Cooper;(e)}) yields
$$
a(q^4)=\varphi(q^4)\varphi(q^{12})+4q^4\psi(q^8)\psi(q^{24}).
$$
Multiplying both sides of this equation by $\psi(q)\psi(q^2)$ results in
$$
\psi(q)\psi(q^2)a(q^4)=\varphi(q^4)\varphi(q^{12})\psi(q)\psi(q^2)+4q^4\psi(q)\psi(q^2)\psi(q^8)\psi(q^{24}),
$$
which implies 
$$
\sum_{n=0}^{\infty} B^{1,2}_4(n) q^n=
\sum_{n=0}^{\infty} m_{4,12\textrm{-} 1,2} (n)q^n+4q^4\sum_{N=0}^{\infty} t_{1,2,8,24}(N)q^N,
$$
where
$$
m_{4,12\textrm{-} 1,2} (n)=
\sharp
\left\{
(x,y,z,w)\in\mathbb{Z}^2\times\mathbb{N}_0^2 \, | \,
n=4x^2+12y^2+t_z+2t_w
\right\}.
$$
\par
From Guo, Pan, and Sun \cite{Sun}, recall that every $n\in\mathbb{N}_0$ can be expressed as $4x^2+2t_y+t_z$ for $(x,y,z)\in\mathbb{Z}\times\mathbb{N}_0^2,$
which implies 
$$
m_{4,12\textrm{-} 1,2} (n)>0 \Longrightarrow B^{1,2}_4(n)>0.
$$
\end{proof}

\subsection{Proof of Theorem \ref{thm:2-triangular(5)} }

\subsubsection{Preliminary results}

Consider the following result by Dickson \cite[p. 113]{Dickson-2}:

\begin{lemma}
\label{lem-1,40,120}
{\it
A nonnegative integer $n\in\mathbb{N}_0$ can be written as $x^2+40y^2+120z^2$ for $x,y,z\in\mathbb{Z}$ if and only if $n\neq 4l+2, 4l+3, 9^k(9l+6), 25^k(5l\pm2),  4^k(8l+5)$, where $k,l\in\mathbb{N}_0.$
}
\end{lemma}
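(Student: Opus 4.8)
The plan is to read Lemma \ref{lem-1,40,120} as a statement about the ternary quadratic form $f(x,y,z)=x^{2}+40y^{2}+120z^{2}$, whose determinant $1\cdot40\cdot120=4800=2^{6}\cdot3\cdot5^{2}$ has $2,3,5$ as its only critical primes; integral representability is then governed by local conditions at these primes together with a global (class number) input. A convenient structural observation is the factorization
\[
f(x,y,z)=x^{2}+40(y^{2}+3z^{2}),
\]
which isolates the roles of $3$ and $5$ and drives the two clean descents below.

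First I would pin down the necessary conditions by a prime-by-prime $p$-adic analysis. At $p=2$, since $40\equiv120\equiv0\pmod 8$ we have $f\equiv x^{2}\pmod 8$, so $f$ takes only the residues $0,1,4$ modulo $8$; this already forbids $n\equiv2,3\pmod4$ and $n\equiv5\pmod8$, and a $2$-adic density computation (forcing $x$ even when $4\mid n$) upgrades the last exclusion to the family $n=4^{k}(8l+5)$. At $p=5$, reducing modulo $25$ forces $5\mid x,y,z$ whenever $25\mid n$, giving the self-descent ``$25\mid n\Rightarrow n$ is represented iff $n/25$ is,'' whose base obstruction $n\equiv\pm2\pmod5$ generates the family $25^{k}(5l\pm2)$. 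At $p=3$, the reduction $f\equiv x^{2}+y^{2}\pmod3$ forces $3\mid x,y$, and then $v_{3}(120)=1$ forces $3\mid z$, so that $9\mid n$ yields the analogous descent ``$9\mid n\Rightarrow n$ is represented iff $n/9$ is,'' whose base obstruction $n\equiv6\pmod9$ generates $9^{k}(9l+6)$; reassuringly, this $3$-part obstruction coincides with the one for $x^{2}+y^{2}+3z^{2}$ in Theorem \ref{thm:three-or four-squares}(1). Collecting these gives exactly the excluded set in the statement, and positivity of $f$ disposes of the archimedean place.

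The harder direction is sufficiency: any $n$ avoiding the listed families is represented over $\mathbb{Z}_{p}$ for every $p$, and I must promote this everywhere-local solvability to an integral representation by $f$ itself. Here I would invoke the arithmetic theory of ternary forms: by Siegel's theorem on local densities the genus of $f$ represents precisely the integers represented everywhere locally, so it suffices to show that $f$ represents everything its genus does. I would verify this by computing the genus of $\langle1,40,120\rangle$ and checking that it consists of a single $\mathrm{SL}_{3}(\mathbb{Z})$-class; class number one then makes the congruence conditions both necessary and sufficient through the Hasse principle for the genus.

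I expect this last global step to be the main obstacle, since the determinant $4800$ is large enough that class number one is not automatic. If the genus has more than one class, the clean exception-free shape of the conditions signals that $f$ and its genus-mates nevertheless represent the same integers, which I would justify by showing the genus has a single spinor genus with no spinor-exceptional integers, or—failing a structural argument—by combining an effective lower bound in the spirit of Duke and Schulze-Pillot with a finite check of the small cases. The two self-descents at $3$ and $5$ can moreover be turned into an induction on $v_{3}(n)$ and $v_{5}(n)$, reducing sufficiency to finitely many residue classes once the $2$-adic behaviour is fixed.
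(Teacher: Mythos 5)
The first thing to note is that the paper contains no proof of this lemma to compare against: it is quoted directly from Dickson's table \cite[pp.~113]{Dickson-2}, where $x^2+40y^2+120z^2$ appears as a regular diagonal ternary form, and the regularity of the forms in that table is a genuine theorem of Dickson and Jones, not a routine verification. Measured against that, your necessity direction is essentially sound at $p=3$ and $p=5$: the descents forcing $3\mid x,y,z$ when $9\mid n$ and $5\mid x,y,z$ when $25\mid n$ are correct, self-similar (they reproduce the same form after dividing by $9$ or $25$), and generate exactly the families $9^k(9l+6)$ and $25^k(5l\pm 2)$. At $p=2$, however, the phrase ``a $2$-adic density computation'' conceals the only delicate point. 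Dividing by $4$ does \emph{not} reproduce the form: forcing $x=2x'$ turns $x^2+40y^2+120z^2$ into $x'^2+10y^2+30z^2$, so there is no self-descent to iterate, and indeed the exclusion pattern changes level by level --- for instance $n\equiv 3\pmod 4$ is excluded, yet $12=4\cdot 3$ is represented over $\mathbb{Z}_2$ (it fails in the lemma only because $12\equiv 2\pmod 5$). The correct route is to compute the $\mathbb{Z}_2$-represented set outright: $y^2+3z^2$ represents over $\mathbb{Z}_2$ exactly $0$ and the elements of even $2$-adic valuation, so $n$ is represented by the form over $\mathbb{Z}_2$ if and only if for some $x\in\mathbb{Z}_2$ either $n=x^2$ or $v_2(n-x^2)$ is odd and at least $3$; unwinding this does give precisely $4l+2$, $4l+3$, $4^k(8l+5)$, but that unwinding is a real argument your sketch omits.

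The genuine gap is the sufficiency direction, which you flag yourself but never close. Everything reduces to the claim that $x^2+40y^2+120z^2$ represents every integer represented by its genus --- that is, that the form is regular --- and this \emph{is} the lemma, not bookkeeping appended to the local analysis. Your primary route (class number one for the genus of $\langle 1,40,120\rangle$, determinant $4800$) is stated as a hope with no computation, and nothing in your argument rules out a second class; your fallbacks are not interchangeable patches, since exhibiting a single spinor genus with no spinor-exceptional integers is a concrete unperformed computation, and the effective bounds in the Duke--Schulze-Pillot approach are so enormous that ``a finite check of the small cases'' is not a step one can wave at. So what you have is the easy half proved (modulo the $2$-adic repair above) and the hard half reduced to an unproved global assertion: a correct strategy outline, but not a proof. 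The honest alternatives are either to carry out a Dickson--Jones style argument in full --- identify the genus and push your descents at $2$, $3$, $5$ down to a finite list of base cases that you then treat by hand --- or to do what the paper itself does and simply cite Dickson.
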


Using Lemma \ref{lem-1,40,120}, we obtain the following proposition:

\begin{proposition}
\label{prop-A-1-40}
{\it A nonnegative integer $n\in\mathbb{N}_0$ can be written as $x^2+40(y^2+yz+z^2)$ for $x,y,z\in\mathbb{Z}$ if and only if $n\neq 4l+2, 4l+3, 9^k(9l+6), 25^k(5l\pm2),  4^k(8l+5)$, where $k,l\in\mathbb{N}_0.$
}
\end{proposition}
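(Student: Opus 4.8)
The plan is to follow verbatim the template already used for Propositions \ref{prop:A-a=1,c=4}, \ref{prop-A-1-16}, and \ref{prop-A-1-24}, now specializing the theta identities at the modulus $c=40$ and then comparing the supports of two representation counts. The only genuinely external input will be Dickson's Lemma \ref{lem-1,40,120}, which pins down exactly when the ternary form $x^2+40y^2+120z^2$ represents $n$; the theta identities will do nothing more than the bookkeeping that transfers this to the binary-plus-quadratic form $x^2+40(y^2+yz+z^2)$.

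First I would replace $q$ by $q^{40}$ in equations (\ref{eqn:Cooper;(f)}) and (\ref{eqn:Cooper;(d)}), obtaining
\begin{align*}
a(q^{40})=&a(q^{160})+6q^{40}\psi(q^{80})\psi(q^{240}),  \\
\varphi(q^{40})\varphi(q^{120})=&a(q^{160})+2q^{40}\psi(q^{80})\psi(q^{240}).
\end{align*}
Multiplying each of these through by $\varphi(q)$ gives
\begin{align*}
\varphi(q)a(q^{40})=&\varphi(q)a(q^{160})+6q^{40}\varphi(q)\psi(q^{80})\psi(q^{240}),  \\
\varphi(q)\varphi(q^{40})\varphi(q^{120})=&\varphi(q)a(q^{160})+2q^{40}\varphi(q)\psi(q^{80})\psi(q^{240}).
\end{align*}
Reading off the coefficient of $q^n$, and using that $\varphi(q)$ tracks $x^2$, that $a(q^{40})$ and $a(q^{160})$ track $40(y^2+yz+z^2)$ and $160(y^2+yz+z^2)$, that $\psi(q^{80})\psi(q^{240})$ tracks $80t_y+240t_z$, and that $\varphi(q^{40})\varphi(q^{120})$ tracks $40y^2+120z^2$, I would rewrite the two identities as
\begin{align*}
\sum_{n=0}^{\infty} A^1_{40}(n)q^n=&\sum_{n=0}^{\infty} A^1_{160}(n)q^n+6q^{40}\sum_{N=0}^{\infty} m_{1\text{-}80,240}(N)q^N,  \\
\sum_{n=0}^{\infty} r_{1,40,120}(n)q^n=&\sum_{n=0}^{\infty} A^1_{160}(n)q^n+2q^{40}\sum_{N=0}^{\infty} m_{1\text{-}80,240}(N)q^N.
\end{align*}

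The crucial observation, and essentially the only thing that needs to be checked, is that $A^1_{40}(n)$ and $r_{1,40,120}(n)$ are written as strictly positive combinations of the very same pair of nonnegative series, namely $A^1_{160}(n)$ together with the shifted term $m_{1\text{-}80,240}(n-40)$. Consequently these two counts have identical support, so that
\begin{align*}
r_{1,40,120}(n)>0 &\Longleftrightarrow A^1_{160}(n)>0 \,\textrm{or}\, m_{1\text{-}80,240}(n-40)>0  \\
&\Longleftrightarrow A^1_{40}(n)>0.
\end{align*}
Invoking Lemma \ref{lem-1,40,120} to identify exactly those $n$ with $r_{1,40,120}(n)>0$ then hands the stated exceptional set $\{4l+2,\,4l+3,\,9^k(9l+6),\,25^k(5l\pm2),\,4^k(8l+5)\}$ over to $A^1_{40}$, which is the assertion. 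I do not expect any real obstacle here: the substitution and multiplication are mechanical, and the support comparison is immediate once both expansions are seen to rest on the identical building blocks. The one point worth verifying carefully is simply that the moduli $160=4\cdot40$, $80=2\cdot40$, and $240=6\cdot40$ produced by the substitution are exactly those matching the definitions of $A^1_{160}$ and $m_{1\text{-}80,240}$.
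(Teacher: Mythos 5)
Your proposal is correct and follows essentially the same route as the paper's own proof: the identical substitution $q\mapsto q^{40}$ in equations (\ref{eqn:Cooper;(f)}) and (\ref{eqn:Cooper;(d)}), multiplication by $\varphi(q)$, the same coefficient identification yielding $A^1_{40}(n)=A^1_{160}(n)+6\,m_{1\text{-}80,240}(n-40)$ and $r_{1,40,120}(n)=A^1_{160}(n)+2\,m_{1\text{-}80,240}(n-40)$, and the same support-equality argument combined with Lemma \ref{lem-1,40,120}. There is nothing to add or correct.
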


\begin{proof}
Replacing $q$ by $q^{40}$ in (\ref{eqn:Cooper;(f)}) and (\ref{eqn:Cooper;(d)}) results in
\begin{align*}
a(q^{40})=&a(q^{160})+6q^{40}\psi (q^{80})\psi (q^{240}),   \\
\varphi(q^{40})\varphi(q^{120})=&a(q^{160})+2q^{40}\psi (q^{80})\psi (q^{240}).
\end{align*}
Multiplying both sides of these equations by $\varphi(q)$ yields
\begin{align}
\varphi(q)a(q^{40})=&\varphi(q)a(q^{160})+6q^{40}\varphi(q)\psi (q^{80})\psi (q^{240}),   \\
\varphi(q)\varphi(q^{40})\varphi(q^{120})=&\varphi(q)a(q^{160})+2q^{40}\varphi(q)\psi (q^{80})\psi (q^{240}),
\end{align}
which implies
\begin{align*}
\sum_{n=0}^{\infty} A^1_{40}(n) q^n=&\sum_{n=0}^{\infty} A^1_{160}(n) q^n +6q^{40} \sum_{N=0}^{\infty} m_{1\text{-}80,240}(N)q^N,  \\
\sum_{n=0}^{\infty} r_{1,40,120}(n) q^n=&\sum_{n=0}^{\infty} A^1_{160}(n) q^n +2q^{40} \sum_{N=0}^{\infty} m_{1\text{-}80,240}(N)q^N.
\end{align*}
Therefore, it follows that
\begin{align*}
 r_{1,40,120}(n)>0 &\,\mathrm{if \,and \, only \, if}\, A^1_{160}(n)>0 \,\textrm{or} \, m_{1\text{-}80,240}(n-40)>0,  \\
                    &\,\mathrm{if \,and \, only \, if}\, A^1_{40}(n)>0,
\end{align*}
which proves the proposition.
\end{proof}

Proposition \ref{prop-A-1-24} gives rise to th following proposition:

\begin{proposition}
\label{prop:B-1-5}
{\it A nonnegative integer $N\in\mathbb{N}_0$ can be written as $t_x+5(y^2+yz+z^2)$ for $(x,y,z)\in\mathbb{N}_0\times\mathbb{Z}^2$ if and only if
$8N+1\neq 9^k(9l+6), 25^k(5l\pm2)$, where $k,l\in\mathbb{N}_0$; in particular, $N\in\mathbb{N}_0$ can be written as $t_x+5(y^2+yz+z^2)$ for $(x,y,z)\in\mathbb{N}_0\times\mathbb{Z}^2$ if $N\not\equiv 1,4 (\bmod \, 9)$, $N\not\equiv 2,4 (\bmod \, 5),$ and $N\not\equiv 3 (\bmod \, 25).$
}
\end{proposition}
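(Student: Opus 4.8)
The plan is to reproduce the template of Propositions~\ref{prop:B-1-2} and~\ref{prop:B-1-3}, now fed by the newly established Proposition~\ref{prop-A-1-40} on $x^2+40(y^2+yz+z^2)$. The engine is the dissection identity~(\ref{eqn:Cooper;(a)}), $\varphi(q)=\varphi(q^4)+2q\psi(q^8)$, which I would multiply through by $a(q^{40})$ to obtain
$$
\varphi(q)a(q^{40})=\varphi(q^4)a(q^{40})+2q\psi(q^8)a(q^{40}).
$$
Here the left side enumerates $A^1_{40}$. On the right, $\varphi(q^4)a(q^{40})=\sum q^{4(x^2+10(y^2+yz+z^2))}$ contributes only to exponents divisible by $4$, while $2q\psi(q^8)a(q^{40})=2\sum q^{8(t_x+5(y^2+yz+z^2))+1}$ contributes exactly to the exponents $\equiv 1\bmod 8$. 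Extracting the coefficient of $q^{8N+1}$ on both sides therefore yields the key equivalence $B^1_5(N)>0\Longleftrightarrow A^1_{40}(8N+1)>0$.

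Next I would invoke Proposition~\ref{prop-A-1-40}, which asserts that $A^1_{40}(m)>0$ exactly when $m$ avoids the shapes $4l+2,\,4l+3,\,9^k(9l+6),\,25^k(5l\pm2),\,4^k(8l+5)$. The observation that trivializes most of these is that $m=8N+1\equiv 1\bmod 8$: being $\equiv 1\bmod 4$ it is never $4l+2$ or $4l+3$, and being odd with $m\equiv 1\bmod 8$ it is never $4^k(8l+5)$ (even for $k\ge 1$, and $\equiv 5\bmod 8$ for $k=0$). Hence only $8N+1\neq 9^k(9l+6)$ and $8N+1\neq 25^k(5l\pm2)$ remain, which is precisely the stated criterion and settles the ``if and only if''.

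For the ``especially'' clause, which is only a sufficient condition, I would convert the two surviving shapes into congruences on $N$. The modulus-$9$ part is immediate: $9^k(9l+6)\equiv 0\bmod 9$ for $k\ge 1$ and $\equiv 6\bmod 9$ for $k=0$, while $8N+1\equiv 0,6\bmod 9$ iff $N\equiv 1,4\bmod 9$, so $N\not\equiv 1,4\bmod 9$ excludes this shape. The modulus-$25$ part is the crux and requires a $5$-adic valuation analysis: a positive integer lies in $\{25^k(5l\pm2)\}$ iff its $5$-adic valuation is even and its $5$-free part is $\equiv\pm 2\bmod 5$. For $m=8N+1$ one checks that $5\mid 8N+1\Leftrightarrow N\equiv 3\bmod 5$ and $25\mid 8N+1\Leftrightarrow N\equiv 3\bmod 25$. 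Thus if $5\nmid 8N+1$ the shape is hit iff $8N+1\equiv 2,3\bmod 5$, i.e.\ $N\equiv 2,4\bmod 5$, ruled out by hypothesis; and the assumption $N\not\equiv 3\bmod 25$ forces the $5$-adic valuation of $8N+1$ to be at most $1$, hence odd whenever positive, so the shape can never be attained through $k\ge 1$. Combining the three congruences yields $8N+1\neq 9^k(9l+6),\,25^k(5l\pm2)$, completing the sufficiency. The only genuinely delicate step is this last valuation bookkeeping; everything else is a mechanical transcription of the earlier propositions.
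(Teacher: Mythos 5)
Your proposal is correct and takes essentially the same route as the paper: multiply $\varphi(q)=\varphi(q^4)+2q\psi(q^8)$ by $a(q^{40})$, extract the coefficient of $q^{8N+1}$ to get $B^1_5(N)>0\Longleftrightarrow A^1_{40}(8N+1)>0$, then apply Proposition~\ref{prop-A-1-40} together with the mod~$9$, mod~$5$, and mod~$25$ translations of the surviving excluded shapes. The only difference is expository: you make explicit the elimination of $4l+2$, $4l+3$, $4^k(8l+5)$ via $8N+1\equiv 1\bmod 8$ and the $5$-adic valuation bookkeeping, both of which the paper leaves implicit.
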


\begin{proof}
Multiplying both sides of (\ref{eqn:Cooper;(a)}) by $a(q^{40})$ yields
$$
\varphi(q)a(q^{40})=\varphi(q^4)a(q^{40})+2q\psi(q^8)a(q^{40}),
$$
which implies
$$
\sum_{n=0}^{\infty} A^1_{40}(n) q^n=\sum_{N=0}^{\infty} A^1_{10}(N) q^{4N}+2\sum_{N=0}^{\infty} B^1_5(N)q^{8N+1}.
$$
We then obtain
$$
B^1_5(N)>0 \,\mathrm{if \,and \, only \, if}\, A^1_{40}(8N+1)>0.
$$
\par
The second statement follows from the facts that
$$
8N+1\equiv 0 \,\mathrm{or} \,6 (\bmod \, 9) \,\mathrm{if \,and \, only \, if}\, N\equiv 1 \mathrm{or} \,4 (\bmod \, 9),
$$
$$
8N+1\equiv \pm2 (\bmod \, 5) \,\mathrm{if \,and \, only \, if}\, N\equiv 2 \,\mathrm{or} \,4 (\bmod \, 5),
$$
and
$$
8N+1\equiv 0 (\bmod \, 25) \,\mathrm{if \,and \, only \, if}\, N \equiv 3 (\bmod \, 25).
$$
\end{proof}

\subsubsection{Proof of Theorem \ref{thm:2-triangular(5)} }

\begin{proof}
By Proposition \ref{prop:B-1-5}, it suffices to show that $n\in\mathbb{N}_0$ can be expressed as $t_x+t_y+5(z^2+zw+w^2)$ for $(x,y,z,w)\in\mathbb{N}_0^2\times \mathbb{Z}^2$ if $n$ satisfies one of the following conditions:

(i) $n\equiv 1 \,\mathrm{or} \,4 (\bmod \, 9),$

(ii) $n\equiv 2 \,\mathrm{or} \,4 (\bmod \, 5),$

(iii) $n\equiv 3 (\bmod \, 25).$

\par
First, suppose that $n\equiv 1 (\bmod \, 9)$ and $n=9N+1$ for $N\in\mathbb{N}_0.$ If $N\not\equiv 1,3 (\bmod \, 5)$ and $N\not\equiv 17 (\bmod \, 25),$
taking $x=1$ yields
$$
n-t_1=9N \equiv 0 (\bmod \, 9), \,\,\not\equiv 2, 4 (\bmod \, 5), \,\,\not\equiv 3 (\bmod \, 25),
$$
which implies $n-t_1$ can be written as $t_y+5(z^2+zw+w^2)$ for $(y,z,w)\in\mathbb{N}_0\times \mathbb{Z}^2.$
\par
If $N\equiv 1 (\bmod \, 5),$ taking $x=4$ yields
$$
n-t_4=9N-9\equiv 0 (\bmod \, 9), \equiv 0 (\bmod \, 5),
$$
which implies $n-t_4$ can be written as $t_y+5(z^2+zw+w^2)$ for $(y,z,w)\in\mathbb{N}_0\times \mathbb{Z}^2.$
\par
If $N\equiv 3 (\bmod \, 5),$ taking $x=2$ results in
$$
n-t_2=9N-2 \equiv 7 (\bmod \, 9), \,\,\equiv 0 (\bmod \, 5),
$$
which implies $n-t_2$ can be written as $t_y+5(z^2+zw+w^2)$ for $(y,z,w)\in\mathbb{N}_0\times \mathbb{Z}^2.$
\par
If $N\equiv 17 (\bmod \, 25),$ taking $x=2$ results in
$$
n-t_2=9N-2\equiv 7 (\bmod \, 9), \,\,\equiv 1 (\bmod \, 5),
$$
which implies $n-t_2$ can be written as $t_y+5(z^2+zw+w^2)$ for $(y,z,w)\in\mathbb{N}_0\times \mathbb{Z}^2.$
\par
Now, suppose $n\equiv 4 (\bmod \, 9)$ and $n=9N+4$ for $N\in\mathbb{N}_0.$ In the same way, we prove that $n$ can be written as $t_x+t_y+5(z^2+zw+w^2)$ for $(x,y,z,w)\in\mathbb{N}_0^2\times \mathbb{Z}^2.$ If $N\not\equiv 1,4(\bmod \, 5)$ and $N\not\equiv 0 (\bmod \, 25),$ we take $x=1.$ If $N\equiv 1 (\bmod \, 5),$ we take $x=7.$
If $N\equiv 4 (\bmod \, 5),$ we take $x=4.$ Finally, if $N\equiv 0 (\bmod \, 25),$ we take $x=3.$
\par
Suppose $n\equiv 2 (\bmod \, 5)$ and $n=5N+2$ for $N\in\mathbb{N}_0.$ If $N\not\equiv 0,6 (\bmod \, 9),$ we take $x=1.$ If $N\equiv 0 \, \mathrm{or} \, 6 (\bmod \, 9),$ we take $x=3.$
\par
Next, assume $n\equiv 4 (\bmod \, 5)$ and $n=5N+4$ for $N\in\mathbb{N}_0.$ If $N\not\equiv 0,6 (\bmod \, 9),$ we take $x=3.$ If $N\equiv 0 \,\mathrm{or} \, 6 (\bmod \, 9),$ we take $x=7.$
\par
Finally, suppose $n\equiv 3 (\bmod \, 25)$ and $n=25N+3$ for $N\in\mathbb{N}_0.$ If $N\not\equiv 4,7 (\bmod \, 9),$ we take $x=2.$ If $N\equiv 4 \,\mathrm{or} \,7 (\bmod \, 9),$ we take $x=4.$
\end{proof}

\section{Proof of Theorem \ref{thm:a,b-c}}

\subsection{Proof of necessary conditions}

\begin{proof}
For fixed positive integers $a$, $b$, and $c$ with $a\le b,$ suppose every $n\in\mathbb{N}$ can be written as $at_x+bt_y+c(z^2+zw+w^2)$ for
$(x,y,z,w)\in\mathbb{N}_0^2\times\mathbb{Z}^2.$
\par
First, assume that $c=1.$ Taking $n=2$ yields $a=1$ or $2.$ If $a=1,$ by Theorem \ref{thm:a-c}, we see that $b$ is arbitrary. If $a=2,$ taking $n=8$ yields $2\le b \le 8.$
\par
Suppose $c=2.$ Taking $n=1$ results in $a=1.$ The choice of $n=4$ implies $1\le b \le 4.$
\par
Next, assume that $c=3.$ Taking $n=1$ results in $a=1.$ Choosing $n=2$ implies $b=1$ or $2.$ Taking $n=8$ implies $b=2.$
\par
Suppose $c=4.$ Taking $n=1,$ we have $a=1.$ Choosing $n=2$ implies $b=1$ or $2.$
\par
Assume $c=5.$ Taking $n=1$ yields $a=1.$ Choosing $n=2$ implies $b=1$ or $2$, and taking $n=4$ implies $b=1.$
\par
Finally, suppose $c\ge 6.$ Taking $n=1$ results in $a=1.$ Choosing $n=2$ implies $b=1$ or $2$; taking $n=4$ implies $b=1.$ On the other hand, $n=5$ cannot be expressed as $t_x+t_y+c(z^2+zw+w^2)$ for $(x,y,z,w)\in\mathbb{N}_0^2\times\mathbb{Z}^2,$ which is a contradiction.
\end{proof}

\subsection{Proof of sufficient conditions}

\begin{proof}
Sufficiency follows from Theorems \ref{thm:2-triangular}, \ref{thm:2-triangular(2)}, \ref{thm:2-triangular(3)}, \ref{thm:2-triangular(4)}, and
\ref{thm:2-triangular(5)}. Note that Theorem \ref{thm:a,b-c} (2) follows from the proof of the necessary conditions.
\end{proof}

\subsubsection*{Acknowledgment}
We are grateful for Professor K. S. Williams and Professor B. Landman for their useful comments. We also would like to thank the referee for recommending various improvements.

\end{document}